
\documentclass[reqno]{amsart}

\usepackage{graphicx,xcolor}
\usepackage{amssymb,amsmath,stmaryrd,txfonts,mathrsfs,amsthm}
\usepackage[foot]{amsaddr}
\usepackage{tikz}

\definecolor{myurlcolor}{rgb}{0,0,0.7}
\usepackage[bookmarks=false]{hyperref}
\hypersetup{colorlinks,
linkcolor=myurlcolor,
citecolor=myurlcolor,
urlcolor=myurlcolor}

\input xy
\xyoption{all}

\newcommand{\maps}{\colon}    
                              
\newcommand{\im}{\mathrm{im\,}} 

\newcommand{\R}{{\mathbb R}}     
\newcommand{\Z}{{\mathbb Z}}     

\newcommand{\A}{{\rm A}}
\newcommand{\B}{{\rm B}}

\newcommand{\F}{{\rm F}}

\renewcommand{\H}{{\rm H}} 
\newcommand{\I}{{\rm I}} 

\renewcommand{\S}{{\rm S}} 
\newcommand{\PGL}{{\rm PGL}}
\newcommand{\Aut}{{\rm Aut}}
\newcommand{\Cov}{{\rm Cov}}

\newcommand{\supp}{\mathrm{supp}}       


\newcommand{\define}[1]{{\bf \boldmath{#1}}}

\newtheorem{thm}{Theorem}    

\newtheorem{lem}[thm]{Lemma}

\newtheorem{defn}[thm]{Definition}

        \newcommand{\be}{\begin{equation}}
        \newcommand{\ee}{\end{equation}}
        \newcommand{\ba}{\begin{eqnarray}}
        \newcommand{\ea}{\end{eqnarray}}
        \newcommand{\ban}{\begin{eqnarray*}}
        \newcommand{\ean}{\end{eqnarray*}}
        \newcommand{\barr}{\begin{array}}
        \newcommand{\earr}{\end{array}}

	\textwidth 6in
	\textheight 9in	\evensidemargin .25in
	\oddsidemargin .25in
	\topmargin .25in
	\headsep 0in
	\headheight 0in
	\footskip .5in
	\pagestyle{plain}
	\pagenumbering{arabic}

\title{Topological Crystals}

\author{John\ C.\ Baez$^{1}$}
\address{$^1$School of Mathematics, University of Edinburgh, James Clerk Maxwell Building, Peter Guthrie Tait Road, Edinburgh, UK EH9 3FD.}
\email{baez@math.ucr.edu}

\date{\today} 

\begin{document}
\maketitle

\begin{abstract}
\noindent 
Sunada's work on crystallography emphasizes the role of the `maximal abelian 
cover' of a graph \(X\).  This is a covering space of \(X\) for which the group of deck 
transformations is the first homology group \(H_1(X,\Z)\).  An embedding of the
maximal abelian cover in a vector space can serve as the pattern for a crystal: 
atoms are located at the vertices, while bonds lie along the edges.  We prove that for any 
connected graph \(X\) without bridges, there is a canonical embedding of the maximal 
abelian cover of \(X\)  into the vector space \(H_1(X,\R)\), called a `topological 
crystal'.  Crystals of graphene and diamond are examples of this construction.  We prove 
that any symmetry of a graph lifts to a symmetry of its topological crystal.   We also 
compute the density of atoms in a topological crystal.   The key technical tools are a way of 
decomposing the 1-chain coming from a path in \(X\) into manageable pieces, and the 
work of Bacher, de la Harpe and Nagnibeda on integral cycles and integral cuts.
\end{abstract}

\section{Introduction} \label{sec:intro}

The `maximal abelian cover' of a graph plays a key role in Sunada's work on topological crystallography \cite{SunadaBook}.  Just as the universal cover of a connected graph \(X\) has the fundamental group \(\pi_1(X)\) as its group of deck transformations, the maximal abelian cover, denoted \(\overline{X}\), has the abelianization of \(\pi_1(X)\) as its group of deck transformations.  It thus covers every other connected cover of \(X\) whose group of deck transformations is abelian.  Since the abelianization of \(\pi_1(X)\) is the first homology group \(H_1(X,\Z)\), there is a close connection between the maximal abelian cover and homology theory. 

In this paper we prove that for a large class of graphs, the maximal abelian cover \(\overline{X}\) can naturally be embedded in the vector space \(H_1(X,\R)\).  Following Sunada, we call this embedded copy of \(\overline{X}\) in  \(H_1(X,\R)\) a `topological crystal'.   The symmetries of the original graph can be lifted to symmetries of its topological crystal, but the topological crystal also has an \(n\)-dimensional lattice of translational symmetries.    In the 3-dimensional case, the topological crystal can serve as the blueprint for an actual crystal, with atoms at the vertices and bonds along the edges.  

The most famous example arises from the tetrahedron.   Here \(X\) is the complete graph on 4 vertices:

\begin{center} 
\raisebox{-0.0 em}{\includegraphics[scale = 0.15]{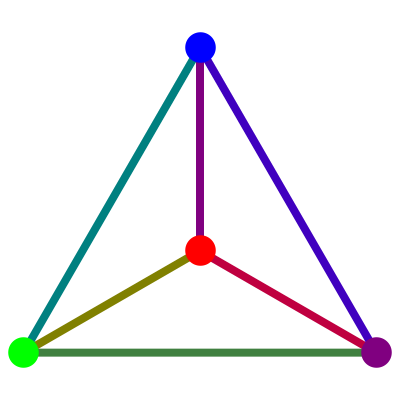}} 
\end{center}

\vskip 0.5em
\noindent 
Since $H_1(X,\R)$ is 3-dimensional, the corresponding topological crystal is a graph embedded in 3-dimensional Euclidean space.  It looks like this:

\begin{center} 
\raisebox{-0.0 em}{\includegraphics[scale = 0.35]{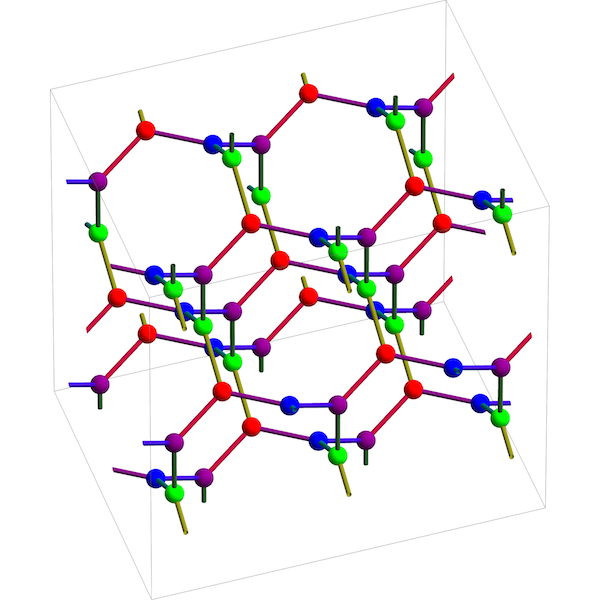}} 
\end{center}

\noindent
This crystal has 4 translation equivalence classes of atoms, corresponding to the 4  vertices of the tetrahedron.  Each atom has 3 equally distant neighbors lying in a plane at \(120{}^\circ\) angles from each other. These planes lie in 4 families, each parallel to one face of a regular tetrahedron. This structure was discovered by the 
crystallographer Laves \cite{HL,Laves} and dubbed the `Laves graph' by  Coxeter \cite{Coxeter}.   Later Sunada called it the `\(\mathrm{K}_4\) lattice', studied its energy minimization properties, and raised the question of whether it could serve as the pattern for a crystal form of carbon \cite{SunadaAMS}.   This form of carbon has not yet been seen, but the Laves graph plays a role in the structure of certain butterfly wings  \cite{HoKP}.

The Laves graph is exciting because it was studied mathematically before being found in nature.   A more familiar example arises from this graph:

\begin{center} 
\raisebox{-0.0 em}{\includegraphics[scale = 0.15]{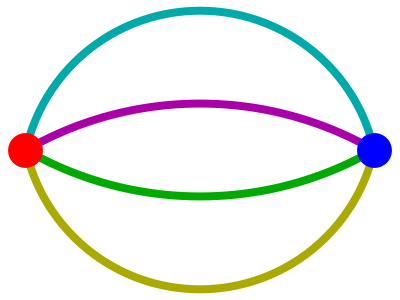}} 
\end{center}

\noindent
The corresponding topological crystal provides the pattern for a diamond:

\begin{center} 
\raisebox{-0.0 em}{\includegraphics[scale = 0.35]{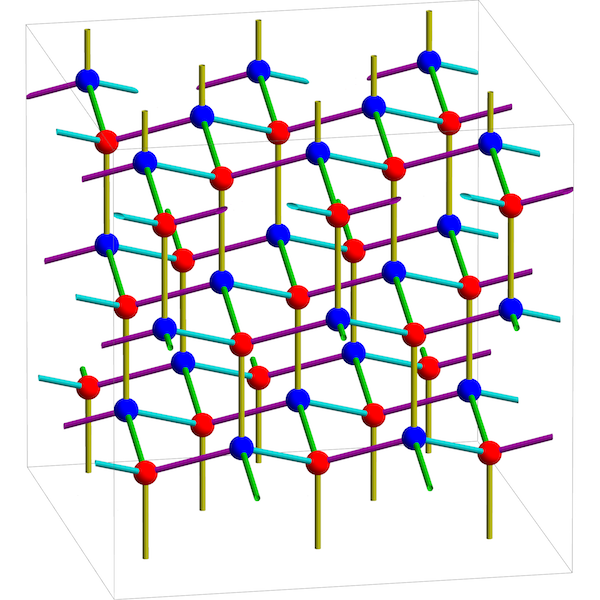}} 
\end{center}

\noindent
This crystal has two translation equivalence classes of atoms, corresponding to the two vertices of the original graph.

The general construction of topological crystals was developed by Eon \cite{Eon1,Eon2}, who used the term `archetypical representation', and later Kotani and Sunada \cite{KotaniSunada}, who used the term `standard realization'.  Sunada uses `topological crystal' for an even more general concept \cite{SunadaBook}, but we only need the following special case.

We start with a graph \(X\).  This has a space \(C_0(X,\R)\) of 0-chains, which are formal linear combinations of vertices, and a space \(C_1(X,\R)\)  of 1-chains, which are formal linear combinations of edges.  There is a boundary operator
\[         \partial \maps C_1(X,\R) \to C_0(X,\R) ,\]
the linear operator sending any edge to the formal difference of its two endpoints.  The kernel of this operator is the space of 1-cycles, \(Z_1(X,\R)\).  There is a unique inner product on the space of 1-chains such that edges form an orthonormal basis.  This determines an orthogonal projection 
\[           \pi \maps C_1(X,\R) \to Z_1(X,\R)  .\]
The vector space \(Z_1(X,\R)\) is isomorphic to the first homology of \(X\) with real coefficients, \(H_1(X,\R)\).  The so-called `topological crystal' of \(X\) is an embedding of its maximal abelian cover \(\overline{X}\) in \(Z_1(X,\R)\).   We obtain this by first embedding \(\overline{X}\) in \(C_1(X,\R)\) and then projecting it down via \(\pi\). 

To accomplish this, we need to fix a basepoint for \(X\).  Each path \(\gamma\) in \(X\) starting at this basepoint determines a 1-chain \(c_\gamma\).   It is easy to show that these 1-chains correspond to the vertices of \(\overline{X}\).  Furthermore, the graph \(\overline{X}\) has an edge from \(c_\gamma\) to \(c_{\gamma'}\) whenever the path \(\gamma'\) is obtained by adding an extra edge to \(\gamma\).   We can think of this edge as a straight line segment from \(c_\gamma\) to \(c_{\gamma'}\).

Our main result, Theorem \ref{embedding_theorem}, says when the projection \(\pi\) maps this copy of \(\overline{X}\) to \(Z_1(X,\R)\) in a one-to-one manner.    Eon \cite{Eon1} and Kotani--Sunada \cite{KotaniSunada} argued that \(\pi\) is one-to-one on the vertices of \(\overline{X}\) if and only if the graph \(X\) has no `bridges': that is, edges whose removal would disconnect \(X\).    We prove this in Theorem \ref{atom_theorem}.   But we go further and show that if and only if \(X\) has no bridges, \(\pi\) maps the whole graph \(\overline{X}\)---not just its vertices but also its edges---into \(Z_1(X,\R)\) in a one-to-one way.   In proving this result, our main technical tool is Lemma \ref{decomposition_lemma_2}, which for any path \(\gamma\) decomposes the 1-chain \(c_\gamma\) into manageable pieces.  

We call the resulting copy of \(\overline{X}\) embedded in \(Z_1(X,\R)\) a `topological crystal'.  For example, let \(X\) be this graph:

\begin{center} 
\raisebox{-0.0 em}{\includegraphics[scale = 0.15]{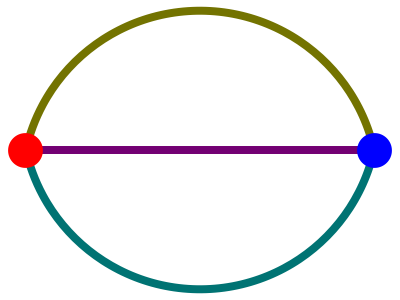}} 
\end{center}

\noindent
Since \(X\) has 3 edges, the space of 1-chains is 3-dimensional.  Since \(H_1(X,\R)\) is 2-dimensional, the space of 1-cycles is a plane in this 3-dimensional space.  If we consider paths \(\gamma\) in \(X\) starting at the red vertex, form the 1-chains \(c_\gamma\), and project them down to this plane, we obtain the following picture:

\begin{center} 
\raisebox{-0.0 em}{\includegraphics[scale = 0.35]{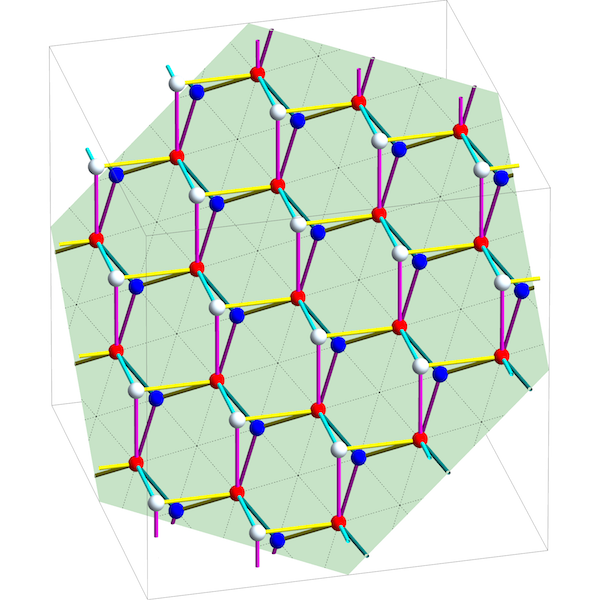}} 
\end{center}

\noindent Here the 1-chains \(c_\gamma\) are the white and red dots.  These are the vertices of \(\overline{X}\), while the line segments between them are the edges of \(\overline{X}\).  Projecting these vertices and edges onto the plane of 1-cycles, we obtain the topological crystal for \(X\).   The blue dots come from projecting the white dots onto the plane of 1-cycles, while the red dots already lie on this plane.  The resulting topological crystal provides the pattern for graphene, a 2-dimensional form of carbon:

\vskip 1em
\begin{center} 
\raisebox{-0.0 em}{\includegraphics[scale = 0.2]{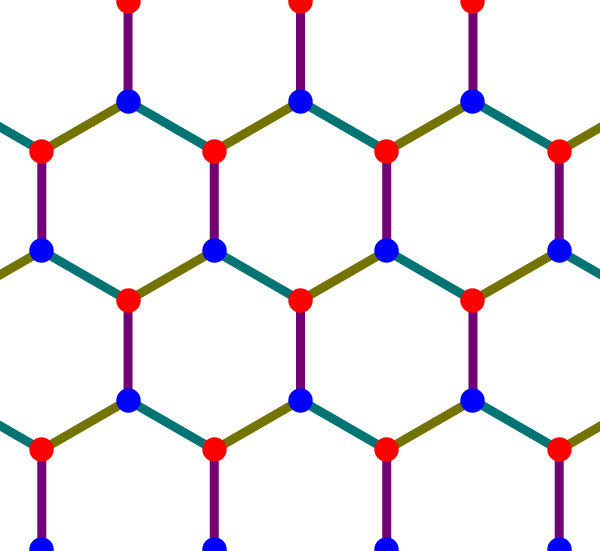}} 
\end{center}

For any connected graph \(X\), there is a covering map
\[        q \maps \overline{X} \to X .\]
The vertices of \(\overline{X}\) come in different kinds, or `colors', depending on which vertex of \(X\) they map to.   We study the group of `covering symmetries', \(\Cov(X)\), consisting of all symmetries of \(\overline{X}\) that map vertices of the same color to vertices of the same color.  In Lemma \ref{covering_lemma_1} we prove that this group fits into a short
exact sequence
\[       1 \longrightarrow H_1(X,\Z) \longrightarrow \Cov(X) \longrightarrow \Aut(X) \longrightarrow 1 \]
where \(\Aut(X)\) is the group of all symmetries of \(X\).  Thus, 
every symmetry of \(X\) is covered by some symmetry of \(\overline{X}\), 
while \(H_1(X,\Z)\) acts on \(\overline{X}\) in a way that preserves the color of every vertex.  In Theorem \ref{symmetry_theorem_2} we prove that \(\Cov(X)\) also acts as symmetries of the topological crystal associated to \(X\). More precisely, there is an action of \(\Cov(X)\) as affine isometries of \(H_1(X,\R)\) for which the embedding of \(\overline{X}\) in this space is equivariant.    In this action, the subgroup \(H_1(X,\Z)\) acts as translations.  In Section \ref{sec:examples} we discuss some examples where \(X\) is highly symmetrical.

Another interesting property of a topological crystal is its `packing fraction'.  The set \(A\) of atoms is contained in the lattice \(L\) obtained by projecting the integral 1-chains down to the space of 1-cycles:
\[      L = \{   \pi(c) : \; c \in C_1(X,\Z)  \}. \]
We can ask what fraction of the points in this lattice are actually atoms.   We call this the `packing fraction', and since \(Z_1(X,\Z)\) acts as translations on both \(A\) and \(L\), we define it to be
\[           \frac{|A/Z_1(X,\Z)|}{|L/Z_1(X,\Z)|}. \]
For example, suppose \(X\) is this graph:

\vskip 1em
\begin{center} 
\raisebox{-0.0 em}{\includegraphics[scale = 0.15]{grapheneGraph.png}} 
\end{center}

\noindent
Then the packing fraction is \(\frac{2}{3}\), as can be seen here:

\vskip 1em
\begin{center} 
\raisebox{-0.0 em}{\includegraphics[scale = 0.20]{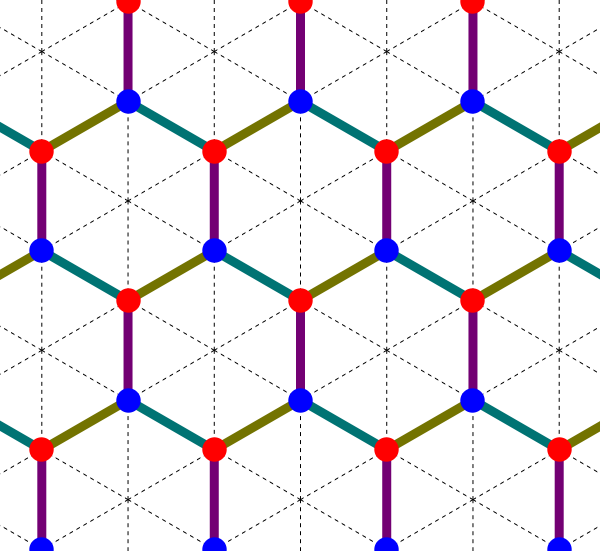}} 
\end{center}
\vskip 1em

\noindent
For any bridgeless connected graph \(X\), we prove in Theorem \ref{packing_fraction_theorem} that
\[           \frac{|A/Z_1(X,\Z)|}{|L/Z_1(X,\Z)|} = \frac{|V|}{|T|}  \]
where \(V\) is the set of vertices and \(T\) is the set of spanning trees.  The main tool here is Bacher, de la Harpe and Nagnibeda's work on integral cycles and integral cuts \cite{BHN}.

\subsection*{Acknowledgments}  

I thank Greg Egan, without whose help this project could never have been done.  I thank David Speyer and Ilya Bogdanov for their help in proving Lemma \ref{bridge_lemma}.  In particular, Speyer found a proof of Lemma \ref{bridge_lemma} based on the lattice of integral cuts \cite{Speyer}, while
Bogdanov noticed the importance of the `no bridges' condition and laid the groundwork 
for the proof here \cite{Bogdanov}.  I thank Pbroks13 and Tom Ruen for the picture of the hexagonal hosohedron, which is available \href{https://commons.wikimedia.org/wiki/File:Hexagonal_Hosohedron.svg}{on Wikicommons} under a Creative Commons Attribution-Share Alike 3.0 Unported license.  I thank Leshabirukov for the picture of the Petersen graph, which is available \href{https://commons.wikimedia.org/wiki/File:Petersen1_tiny.svg}{on Wikicommons} under the same license.  All the other pictures were created by Egan.

\vskip 2em

\section{Maximal Abelian Covers}
\label{sec:maximal_abelian_covers}

We start by reviewing some concepts from Sunada's \textit{Topological Crystallography} \cite{SunadaBook}.   It is convenient to work with graphs having two copies of each edge, one pointing in each direction.   Thus, he defines a \define{graph} \(X = (E,V,s,t,i) \) to consist of a set \(V\) of \define{vertices}, a set \(E\) of \define{edges}, maps \(s,t \maps E \to V\) assigning to each edge its \define{source} and \define{target}, and a map \(i \maps E \to E\) sending each edge to its \define{inverse}, obeying
\[       s(i(e)) = t(e), \quad t(i(e)) = s(e)  ,  \qquad  i(i(e)) = e  \]
and 
\[          i(e) \ne e \]
for all \(e \in E\).  If \(s(e) = v\) and \(t(e) = w\) we write \(e \maps v \to w\), and draw \(e\) as
an interval with an arrow on it pointing from \(v\) to \(w\).  We write \(i(e)\) as \(e^{-1}\), and draw \(e^{-1}\) as the same interval as \(e\), but with its arrow reversed.   The equations obeyed by \(i\) say that taking the inverse of \(e \maps v \to w\) gives an edge \(e^{-1} \maps w \to v\) and that \((e^{-1})^{-1} = e\).   No edge can be its own inverse. 

A \define{map of graphs}, say \(f \maps X \to X'\), is a pair of functions, one sending vertices to vertices and one sending edges to edges, that preserve the source, target and inverse maps.  By abuse of notation we call both of these functions \(f\).

From a graph \(X\) we can build a topological space \(|X|\) called its \define{geometric realization}.  We do this by taking one point for each vertex and gluing on one copy of \([0,1]\) for each edge \(e \maps v \to w\), gluing the point \(0\) to \(v\) and the point \(1\) to \(w\), and then identifying the interval for each edge \(e\) with the interval for its inverse by means of the map \(t \mapsto 1 - t\).   Any map of graphs gives rise to a continuous map between their geometric realizations, and we say a map of graphs is a \define{cover} if this continuous map is a covering map.   For simplicity we denote the fundamental group of \(|X|\) by \(\pi_1(X)\), and similarly for other topological invariants of \(|X|\).   However, there are contexts in which we need to distinguish between a graph \(X\) and its geometric realization \(|X|\).   

Any connected graph \(X\) has a universal cover, meaning a connected cover
\[      p \maps \widetilde{X} \to X \]
that covers every connected cover of \(X\).  The geometric realization of \(\widetilde{X}\) is connected and simply connected.  The fundamental group \(\pi_1(X)\) acts as deck transformations of \(\widetilde{X}\), that is, invertible maps \(g \maps \widetilde{X} \to \widetilde{X}\) such that \(p \circ g = p\).   We can take the quotient of \(\widetilde{X}\) by the action of any subgroup \(G \subseteq \pi_1(X)\) and get a cover \( q \maps \widetilde{X}/G  \to X\).

In particular, if we take \(G\) to be the commutator subgroup of \(\pi_1(X)\), we call the graph \(\widetilde{X}/G\) the \define{maximal abelian cover} of the graph \(X\), and 
denote it by \(\overline{X}\).  We obtain a cover
\[ q \maps \overline{X} \to X  \]
whose group of deck transformations is the abelianization of \(\pi_1(X)\).
This is just the first homology group \(H_1(X,\Z)\), which is a free abelian 
group.

We need a concrete description of the maximal abelian cover.  We start with the universal cover, but first we need some preliminaries on paths in graphs.

Given vertices \(x,y\) in \(X\), define a \define{path} from \(x\) to \(y\) to be a word of edges \(\gamma = e_1 \cdots e_\ell\) with \(e_i \maps v_{i-1} \to v_i\) for some vertices \(v_0, \dots, v_\ell\) with \(v_0 = x\) and \(v_\ell = y\).   We allow the word to be empty if and only if \(x = y\); this gives the \define{trivial path} from \(x\) to itself.  Given a path \(\gamma\) from \(x\) to \(y\) we write \(\gamma \maps x \to y\), and we write the trivial path from \(x\) to itself as \(1_x \maps x \to x\).   We define the composite of paths \( \gamma \maps x \to y \) and \( \delta \maps y \to z \) via concatenation of words, obtaining a path we call \(\gamma \delta \maps x \to z\).    We call a path from a vertex \(x\) to itself a \define{loop} based at \(x\).

We say two paths from \(x\) to \(y\) are \define{homotopic} if one can be obtained from the other by repeatedly introducing or deleting subwords of the form \(e_i e_{i+1}\) where \(e_{i+1} = e_i^{-1}\).   If \([\gamma]\) is a homotopy class of paths from \(x\) to \(y\), we write \( [\gamma] \maps x \to y\).  We can compose homotopy classes \( [\gamma] \maps x \to y\) and \([\delta] \maps y \to z\) by setting \( [\gamma] [\delta] = [\gamma \delta]\).   

If \(X\) is a connected graph, we can describe the universal cover \(\widetilde{X}\) as follows.  Fix a vertex \(x_0\) of \(X\), which we call the \define{basepoint}.  The vertices of \(\widetilde{X}\) are defined to be the homotopy classes of paths \( [\gamma] \maps x_0 \to x\) where \(x\) is arbitrary.   The edges in \(\widetilde{X}\) from the vertex \([\gamma] \maps x_0 \to x\) to the vertex \([\delta] \maps x_0 \to y\) are defined to be the edges \(e \in E\) with \([\gamma e] = [\delta]\).  In fact, there is always at most one such edge.   There is an obvious map of graphs 
\[            p \maps \widetilde{X} \to X \]
sending each vertex  \( [\gamma] \maps x_0 \to x\) of \(\widetilde{X}\) to the vertex
\(x\) of \(X\).  This map is a cover.
 
Now we are ready to construct the maximal abelian cover \(\overline{X}\).  For this, we impose a further equivalence relation on paths, which is designed to make composition commutative whenever possible.  However, we need to be careful.   If \( \gamma \maps x \to y \) and \( \delta \maps x' \to y' \), the composites  \( \gamma \delta \) and \(\delta \gamma\) are both well-defined if and only if \(x' = y\) and \(y' = x\).  In this case, \(\gamma \delta\) and \(\delta \gamma\) share the same starting point and share the same ending point if and only if \(x = x'\) and \(y = y'\).   If all four of these equations hold, both \(\gamma\) and \(\delta\) are loops based at \(x\).    So, we shall impose the relation \(\gamma \delta = \delta \gamma\) only in this case.

We say two paths are \define{homologous} if one can be obtained from another by:

\begin{itemize} 
\item repeatedly introducing or deleting subwords \(e_i e_{i+1}\) where 
\(e_{i+1} = e_i^{-1}\), and/or
\item repeatedly replacing subwords of the form \(e_i \cdots e_j e_{j+1} \cdots e_k \)
by those of the form \break \( e_{j+1} \cdots e_k e_i \cdots e_j \), where
\( e_i \cdots e_j \) and \(e_{j+1} \cdots e_k \) are loops based at the same vertex.
\end{itemize}

\noindent Our use of the term `homologous' is a bit nonstandard, but we shall see some nice relations to homology theory, which we hope justifies this terminology.

We denote the homology class of a path \(\gamma\) by \(\llbracket \gamma \rrbracket\).   Note that if two paths \(\gamma \maps x \to y\), \(\delta \maps x' \to y'\) are homologous then \(x = x'\) and \(y = y'\).   Thus, the starting and ending points of a homology class of paths are well-defined, and given any path \( \gamma \maps x \to y \) we write \( \llbracket \gamma \rrbracket \maps x \to y \).   The composite of homology classes is also well-defined if we set \( \llbracket \gamma \rrbracket \llbracket \delta \rrbracket = \llbracket \gamma \delta \rrbracket\).  

We construct the maximal abelian cover of a connected graph \(X\) just as we constructed its universal cover, but using homology classes rather than homotopy classes of paths.  Fix a basepoint \(x_0\) for \(X\).  The vertices of \(\overline{X}\), or \define{atoms}, are defined to be the homology classes of paths \( \llbracket\gamma\rrbracket \maps x_0 \to x\) where \(x\) is arbitrary.   Any edge of \(\overline{X}\), or \define{bond}, goes from some atom \(\llbracket \gamma\rrbracket \maps x_0 \to x\) to some atom \(\llbracket \delta \rrbracket \maps x_0 \to y\).  The bonds from \(\llbracket \gamma\rrbracket\) to \(\llbracket \delta \rrbracket \) are defined to be the edges \(e \in E\) with \(\llbracket \gamma e \rrbracket = \llbracket \delta \rrbracket\).  There is at most one bond between any two atoms.  Again we have a covering map
\[            q \maps \overline{X} \to X .\]

The homotopy classes of loops based at \(x_0\) form a group, with composition as the group operation.  This is the  \define{fundamental group} \(\pi_1(X)\) of the graph \(X\).  This is isomorphic to the fundamental group of the space associated to \(X\).   By our construction of the universal cover, \(\pi_1(X)\) is also the set of vertices of \(\widetilde{X}\) that are mapped to \(x_0\) by \(p\).  Furthermore, any element \( [\gamma] \in \pi_1(X) \) defines a deck transformation of \(\widetilde{X}\) that sends each vertex \( [\delta] \maps x_0 \to x \) to the vertex \( [\gamma] [\delta] \maps x_0 \to x\).

Similarly, the homology classes of loops based at \(x_0\) form a group with composition as the group operation.   Since the additional relation used to define homology classes is precisely that needed to make composition of homology classes of loops commutative, this group is the abelianization of \(\pi_1(X)\).  It is therefore isomorphic to the first homology group \(H_1(X,\mathbb{Z})\) of the geometric realization of \(X\).  By our construction of the maximal abelian cover, \(H_1(X,\Z)\) is also the set of vertices of \(\overline{X}\) that are mapped to \(x_0\) by \(q\).  Furthermore, any element \( \llbracket\gamma\rrbracket \in H_1(X,\Z) \) defines a deck transformation of \(\overline{X}\) that sends each vertex \( \llbracket\delta\rrbracket \maps x_0 \to x \) to the vertex \( \llbracket\gamma\rrbracket \llbracket\delta\rrbracket \maps x_0 \to x\).

\section{Atoms}
\label{sec:atoms}

Given a connected graph \(X\), we call the vertices of its maximal abelian cover `atoms' because they play the role of atoms in a topological crystal.  Now we describe a systematic procedure for mapping these atoms into a vector space with inner product, namely the space of 1-cycles \(Z_1(X,\R)\).  We show that this map is an embedding if and only if the graph has no bridges.

We begin with some standard material.  Let \(X\) be a graph.  The group of \define{integral 0-chains} on \(X\), \(C_0(X,\Z)\), is the free abelian group on the set of vertices of \(X\).   The group of  \define{integral 1-chains} on \(X\), \(C_1(X,\Z)\), is the quotient of the free abelian group on the set of edges of \(X\) by relations \(e^{-1} = -e\) for every edge \(e\).   The \define{boundary map} is the homomorphism 
\[   \partial \maps C_1(X,\Z) \to C_0(X,\Z)  \]
such that
\[      \partial e = t(e) - s(e) \]
for each edge \(e\), and
\[    Z_1(X,\Z) =  \ker \partial \]
is the group of \define{integral 1-cycles} on \(X\).  Any path \(\gamma = e_1 \cdots e_n\) in \(X\) determines an integral 1-chain \(c_\gamma = e_1 + \cdots + e_n\).     
For any path \(\gamma\) we have \(c_{\gamma^{-1}} = -c_{\gamma}\), and if \(\gamma\) and \(\delta\) are composable then \(c_{\gamma \delta} = c_\gamma + c_\delta\).

We define vector spaces of \define{0-chains} and \define{1-chains} by 
\[C_0(X,\R) = C_0(X,\Z) \otimes \R, \qquad C_1(X,\R) = C_1(X,\Z) \otimes \R, \]
respectively, and extend the boundary map to a linear map 
\[    \partial \maps  C_1(X,\R) \to C_0(X,\R) . \]
We let \(Z_1(X,\R)\) be the kernel of this linear map, or equivalently,
\[    Z_1(X,\R) = Z_1(X,\Z) \otimes \R  , \]
and we call elements of this vector space \define{1-cycles}.  Since the space of 2-chains is trivial, the space of 1-cycles is isomorphic to the first cohomology with real coefficients, \(H_1(X,\R)\).   Since \(Z_1(X,\Z)\) is a free abelian group, it forms a lattice in the space of 1-cycles.  Any edge of \(X\) can be seen as a 1-chain, and there is a unique inner product on \(C_1(X,\R)\) such that edges form an orthonormal basis (with each edge \(e^{-1}\) counting as the negative of \(e\).)   There is thus an orthogonal projection 
\[      \pi \maps C_1(X,\R) \to Z_1(X,\R) . \]

We now come to the main construction, first introduced by Kotani and Sunada \cite{KotaniSunada}:

\begin{defn}
\label{atom_definition}
Let \(X\) be a connected graph with \(V\) as its set of vertices and a chosen basepoint \(x_0 \in V\).  Let its set of \define{atoms} be
\[   A = \{ \llbracket\alpha\rrbracket : \; \alpha \maps x_0 \to x \; \textrm{ for some } x \in V\}.\]
Define the map 
\[    i \maps A \to Z_1(X,\R)  \]
by
\[    i(\llbracket \alpha \rrbracket) = \pi(c_\alpha) .\]
\end{defn}

\noindent
That \(i\) is well-defined follows from Lemma \ref{chain_lemma} below: homologous paths give the same 1-chain.  The map \(i\) is one-to-one precisely for graphs that have no `bridges':

\begin{defn}
\label{bridge_definition}
An edge of a graph is a \define{bridge} if removing that edge disconnects the graph.  A graph is \define{bridgeless} if none of its edges are bridges.
\end{defn}

 Eon \cite{Eon1} and Kotani--Sunada \cite{KotaniSunada} gave arguments for the following result.
  
\begin{thm} 
\label{atom_theorem}
Let \(X\) be a connected graph.   Then the following are equivalent:
\begin{enumerate}
\item \(X\) is bridgeless.
\item The map \( i \maps A \to Z_1(X,\R)  \) is one-to-one.
\end{enumerate}
\end{thm}

\begin{proof}
The map \(i\) is one-to-one if and only if for any  
atoms \(\llbracket \alpha \rrbracket\) and \(\llbracket \beta \rrbracket\), 
\(i(\llbracket \alpha \rrbracket)  = i(\llbracket \beta \rrbracket)\) implies 
\( \llbracket \alpha \rrbracket = \llbracket \beta \rrbracket\).
Note that \(\gamma = \beta^{-1} \alpha \) is a path in \(X\) with \(c_\gamma = c_{\alpha} - c_\beta\), so
\[    \pi(c_\gamma) = \pi(c_{\alpha} - c_\beta) = 
i(\llbracket \alpha \rrbracket) - i(\llbracket \beta \rrbracket) .\]
Since \(\pi(c_\gamma)\) vanishes if and only if \(c_\gamma\) is orthogonal to every
1-cycle, we have 
\[  c_{\gamma} \textrm{ is orthogonal to every 1-cycle}   \; \iff \;   i(\llbracket \alpha \rrbracket)  = i(\llbracket \beta \rrbracket) .   \]
On the other hand, Lemma \ref{chain_lemma} implies that
\[   c_\gamma = 0 \; \iff \; \llbracket \alpha \rrbracket = \llbracket \beta \rrbracket .\]
Thus, to prove (1)\(\iff\)(2), it suffices to that show that \(X\) has no bridges if and only if every 1-chain \(c_\gamma \) orthogonal to every 1-cycle has \(c_\gamma =0\).  We show this in Lemma \ref{bridge_lemma}.  \end{proof}

\begin{lem}
\label{chain_lemma}  
Let \(X\) be a connected graph.  Two paths \(\alpha, \beta \maps x \to y \) in \(X\) are homologous if and only if \(c_\alpha = c_\beta\).
\end{lem}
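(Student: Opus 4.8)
The plan is to prove the two implications separately, with the forward direction being essentially bookkeeping and the reverse direction requiring the real work. For the forward direction, suppose $\alpha$ and $\beta$ are homologous. It suffices to check that each of the two elementary moves defining the homology relation preserves the associated $1$-chain. Inserting or deleting a subword $e_i e_{i+1}$ with $e_{i+1} = e_i^{-1}$ changes $c_\gamma$ by $e_i + e_i^{-1} = e_i - e_i = 0$, using the relation $e^{-1} = -e$ in $C_1(X,\Z)$. Replacing a subword $e_i \cdots e_j e_{j+1} \cdots e_k$ by $e_{j+1} \cdots e_k e_i \cdots e_j$ merely permutes the summands, so it also leaves $c_\gamma$ unchanged. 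Hence homologous paths yield equal $1$-chains; in particular this also justifies that the map $i$ in Definition \ref{atom_definition} is well-defined.

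For the reverse direction, suppose $\alpha, \beta \maps x \to y$ satisfy $c_\alpha = c_\beta$. First reduce to the case of a loop: set $\gamma = \alpha \beta^{-1}$, a loop based at $x$, so that $c_\gamma = c_\alpha - c_\beta = 0$, and it suffices to show that any loop $\gamma$ based at $x$ with $c_\gamma = 0$ is homologous to the trivial loop $1_x$ (then $\alpha$ is homologous to $\gamma\beta$, which is homologous to $\beta$). Next, using the first homology move, reduce $\gamma$ to a \emph{reduced} loop, i.e.\ one containing no subword $e_i e_i^{-1}$; this does not change $c_\gamma$ or the homology class. Now I would argue by induction on the length $\ell$ of the reduced loop $\gamma = e_1 \cdots e_\ell$. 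If $\ell = 0$ we are done. If $\ell > 0$, then since $c_\gamma = 0$, every edge appearing in $\gamma$ must appear with total coefficient zero, so the edge $e_1$ (or its inverse) must reappear later: there is some smallest index $m > 1$ with $e_m \in \{e_1, e_1^{-1}\}$. The case $e_m = e_1^{-1}$ cannot happen: reducedness rules out $m = 2$, and for $m > 2$ the word $e_2 \cdots e_{m-1}$ would be a loop based at $t(e_1)$, allowing us via the second homology move to cyclically rotate and then cancel $e_1 e_m = e_1 e_1^{-1}$, contradicting minimality of $\ell$ after reduction — so instead I would simply handle this by producing a shorter reduced loop in the same homology class, completing the induction. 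The remaining case is $e_m = e_1$; here $e_2 \cdots e_{m-1}$ is again a loop based at $t(e_1) = s(e_m)$, and we can use the second move to slide it past the loop $e_1 \cdots$: more carefully, we rewrite $\gamma$ so that the two copies of $e_1$ become adjacent, at which point they do \emph{not} cancel but we can instead peel off a strictly shorter loop and recurse.

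The main obstacle is organizing this inductive cancellation cleanly, because the second homology move only applies when the two pieces being swapped are \emph{loops based at the same vertex}, and a priori the segment of $\gamma$ between two occurrences of $e_1$ need not be a loop. The key observation that resolves this is that if $\gamma = e_1 \cdots e_\ell$ is a loop based at $x$ and $e_1 \maps x \to v$, then the terminal segment from the first return to $v$ onward, together with the prefix, can always be rearranged using the allowed moves; concretely, any occurrence of $e_1$ at position $m$ means $e_1 \cdots e_{m-1}$ is a loop at $x$ and $e_m \cdots e_\ell$ is a path from $s(e_m) = v$ back to $x$, so $e_2\cdots e_{m-1}$ is a loop at $v$ and $e_1 (e_m \cdots e_\ell)$ is a loop at $x$; the second move lets us swap these two loops. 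Alternatively, and perhaps more transparently, one can invoke the standard fact that $\gamma$ is homologous to $1_x$ iff the element of $\pi_1(X)$ represented by $\gamma$ lies in the commutator subgroup, combined with the fact — true for the free group $\pi_1(X)$ — that the abelianization map is exactly "record the edge-coefficient vector," which is $c_\gamma$; since a tree contributes no generators, $c_\gamma = 0$ forces $[\gamma]$ into the commutator subgroup. I would present the direct combinatorial argument as the main proof and mention the $\pi_1$ viewpoint as a remark.
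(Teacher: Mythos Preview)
Your forward direction and your reduction to the statement ``a loop $\gamma$ at $x$ with $c_\gamma = 0$ is homologous to $1_x$'' are correct and match the paper exactly. What you label as the \emph{alternative} argument---$\llbracket\gamma\rrbracket$ lives in the abelianization of $\pi_1(X)$, the abelianization is $H_1(X,\Z)\cong Z_1(X,\Z)$, and under this isomorphism $\llbracket\gamma\rrbracket\mapsto c_\gamma$, so $c_\gamma=0$ forces $\llbracket\gamma\rrbracket$ trivial---\emph{is} the paper's proof. You should promote it to the main argument.

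The combinatorial induction you propose as the primary proof has a genuine gap. The second homology move only lets you swap two \emph{adjacent} subwords that are \emph{both loops at the same vertex}; it does not give you cyclic rotation of $\gamma$. In your case $e_m=e_1^{-1}$ with $m>2$, the subword $e_2\cdots e_{m-1}$ is indeed a loop at $v=t(e_1)$, but neither $e_1$ nor $e_m$ nor any word adjacent to this loop is itself a loop at $v$, so there is no legal swap that brings $e_1$ next to $e_m$ for cancellation. Writing $\gamma=(e_1 L e_1^{-1})R$ with $L$ a loop at $v$ and $R$ a loop at $x$ and swapping the two factors at $x$ does not shorten anything. In your case $e_m=e_1$, the decomposition $\gamma=(e_1\cdots e_{m-1})(e_m\cdots e_\ell)$ into two loops at $x$ is correct, but swapping them again does not reduce length, and you have not said how to ``peel off a strictly shorter loop.'' A direct combinatorial argument along these lines can be made to work (for instance by fixing a spanning tree and rewriting $\gamma$ in terms of the standard generators of $\pi_1$), but as written the inductive step is an aspiration rather than a proof. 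Since the $\pi_1$/abelianization route is both shorter and exactly what the paper does, lead with that and drop the induction, or relegate it to a remark only after filling the gap.
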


\begin{proof} The `only if' direction is clear, since \(\beta\) is homologous to \(\alpha\) precisely when it can be obtained from \(\beta\) by:
\begin{itemize} 
\item repeatedly introducing or deleting a subword \(e_i e_{i+1}\) where 
\(e_{i+1} = e_i^{-1}\), and/or
\item repeatedly replacing a subword of the form \(e_i \cdots e_j e_{j+1} \cdots e_k \)
by a subword of the form \( e_{j+1} \cdots e_k e_i \cdots e_j \), where
\( e_i \cdots e_j \) and \(e_{j+1} \cdots e_k \) are loops based at the same vertex,
\end{itemize}
and these moves do not change the corresponding 1-chain.   

Conversely, suppose \(c_\alpha = c_\beta\).  Then \(\gamma = \beta^{-1}\alpha\) is a loop based at \(x\) with \(c_\gamma = c_\alpha - c_\beta = 0\).   In Section \ref{sec:maximal_abelian_covers} we saw that the group of homology classes of loops based at any vertex is the abelianization of \(\pi_1(X)\), namely \(H_1(X,\Z)\).  For a graph we have \(H_1(X,\Z) \cong Z_1(X,\Z)\), and the cycle associated to any loop \(\gamma\) is 
\(c_\gamma\).   Thus, \(c_\gamma = 0 \) implies that \(\gamma\) is homologous to the trivial loop.   This implies that \(\beta\gamma\) is homologous to \(\beta\).  It is also easy to see that \(\beta\gamma = \beta\beta^{-1}\alpha\) is homologous to \(\alpha\).   Thus \(\alpha\) is homologous to \(\beta\).
\end{proof}

The following lemmas are the technical heart of Theorems \ref{atom_theorem} and \ref{embedding_theorem}.  We need to show that any nonzero 1-chain coming from a path in a bridgeless graph has nonzero inner product with some 1-cycle.  The following lemmas, inspired by an idea of Ilya Bogdanov \cite{Bogdanov}, yield an algorithm for actually constructing such a 1-cycle.  This 1-cycle also has other desirable properties, which will come in handy later.

To state these, let a \define{simple path} be one in which each vertex appears at most once.  Let a \define{simple loop} be a loop \(\gamma \maps x \to x\) in which each vertex except \(x\) appears at most once, while \(x\) appears exactly twice, as the starting point and ending point.  Let the \define{support} of a 1-chain \(c\), denoted \(\supp(c)\), be the set of edges \(e\) such that \(\langle c, e\rangle > 0\).   This excludes edges with \(\langle c, e \rangle = 0 \), but also those with \(\langle c , e \rangle < 0\), which are inverses of edges in the support.  Note that
\[    c = \sum_{e \in \supp(c)} \langle c, e \rangle e  .\]
Thus, \(\supp(c)\) is the smallest set of edges such that \(c\) can be written as a positive linear combination of edges in this set.  Given 1-chains \(c\) and \(c'\), we have \(\supp(c') \subseteq \supp(c)\) if and only if \(\langle c, e\rangle = 0\) implies \(\langle c', e \rangle = 0\) and \(\langle c, e \rangle > 0\) implies \(\langle c', e \rangle \ge 0\).

\begin{lem} 
\label{decomposition_lemma_1}
Let \(X\) be any graph and let \(c\) be an integral 1-cycle on \(X\).  
Then for some \(n\) we can write 
\[     c = c_{\sigma_1} + \cdots +  c_{\sigma_n}  \]
where \(\sigma_i\) are simple loops with \(\supp(c_{\sigma_i}) \subseteq \supp(c)\).
\end{lem}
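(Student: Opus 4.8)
The plan is to induct on the size of the support $\supp(c)$, peeling off one simple loop at a time.

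\textbf{Setup and base case.} If $c = 0$, take $n = 0$ (the empty sum). Otherwise $\supp(c)$ is nonempty, and I claim the subgraph $Y$ of $X$ whose edges are those in $\supp(c)$ (together with their inverses) and whose vertices are the endpoints of these edges contains a simple loop. The reason: since $\partial c = 0$, at every vertex $v$ the signed edges of $c$ balance, so $v$ cannot be incident to exactly one edge of $Y$ — every vertex of $Y$ has degree at least $2$. A finite graph in which every vertex has degree $\geq 2$ contains a cycle, i.e.\ a simple loop. Concretely, start at any vertex and keep walking along edges of $\supp(c)$ (never immediately backtracking); since degrees are $\geq 2$ this never gets stuck, and finiteness forces a repeated vertex, at which point I extract a simple loop $\sigma_1$ lying entirely within $\supp(c)$.

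\textbf{The inductive step.} Having found a simple loop $\sigma_1$ with $\supp(c_{\sigma_1}) \subseteq \supp(c)$, I want to subtract off a suitable multiple and reduce the support. The subtlety is orientation: a priori $c$ and $c_{\sigma_1}$ might disagree in sign on some edge of $\sigma_1$, so $c - c_{\sigma_1}$ could have \emph{larger} support. To handle this I will choose the orientation of $\sigma_1$ so that $\langle c, e \rangle > 0$ for at least one edge $e$ of $\sigma_1$ — but more care is needed since a single simple loop might traverse edges on which $c$ has both signs. The cleaner approach: when building the walk above, at each vertex always step along an edge $e$ with $\langle c, e\rangle > 0$ (such an edge exists because the positive part of $c$ at each vertex of $Y$ must also balance the negative part, so every vertex incident to $\supp(c)$ has at least one outgoing edge in $\supp(c)$ — this uses $\partial c = 0$ again). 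Then the resulting simple loop $\sigma_1$ satisfies $\langle c, e \rangle > 0$ for every edge $e$ of $\sigma_1$, i.e.\ $c_{\sigma_1}$ is a genuine ``sub-chain'' of $c$ in the sense that $c - c_{\sigma_1}$ is still a nonnegative combination of edges in $\supp(c)$. Hence $\supp(c - c_{\sigma_1}) \subseteq \supp(c)$, and it is \emph{strictly} smaller, since the edge of $\sigma_1$ on which $\langle c, e\rangle$ is smallest either drops out or has its coefficient decreased — actually to guarantee an edge drops out I should subtract $m\, c_{\sigma_1}$ where $m = \min_{e \in \sigma_1} \langle c, e \rangle \geq 1$; then $c' = c - m\,c_{\sigma_1}$ is again an integral $1$-cycle (difference of $1$-cycles), has $\supp(c') \subsetneq \supp(c)$, and by induction $c' = c_{\sigma_2} + \cdots + c_{\sigma_n}$ with each $\supp(c_{\sigma_i}) \subseteq \supp(c') \subseteq \supp(c)$. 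Writing $m\, c_{\sigma_1} = c_{\sigma_1} + \cdots + c_{\sigma_1}$ ($m$ copies) completes the decomposition.

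\textbf{Main obstacle.} The genuinely delicate point is the orientation/sign bookkeeping: ensuring that the extracted simple loop can be oriented so that $c_{\sigma_1} \leq c$ edge-wise (so that subtraction does not enlarge the support). This hinges on the observation that because $c$ is a cycle, its ``positive part'' $\sum_{e \in \supp(c)} \langle c,e\rangle\, e$ already has the property that at each vertex there is both an incoming and an outgoing supported edge, which is exactly what lets the greedy walk always proceed along supported edges and close up into a simple loop within the support. Once that is set up correctly, the induction on $|\supp(c)|$ is routine. I should also double-check that each $c_{\sigma_i}$ is literally of the form $c_{\sigma}$ for an honest path (loop) $\sigma$, which is immediate from how $\sigma$ is constructed as a word of edges.
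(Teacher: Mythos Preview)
Your proposal is correct and follows essentially the same route as the paper: walk along edges in \(\supp(c)\), using the balance condition \(\partial c = 0\) to guarantee an outgoing supported edge at each step, extract a simple loop \(\sigma_1\) with \(\supp(c_{\sigma_1}) \subseteq \supp(c)\), subtract, and iterate. The only cosmetic difference is termination: the paper subtracts one copy of \(c_{\sigma_1}\) at a time and terminates because the total sum of coefficients strictly decreases, whereas you subtract \(m\) copies at once so that at least one edge drops from the support and then induct on \(|\supp(c)|\); both work equally well.
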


\begin{proof} 
If \(c = 0\) we have \(c = c_\sigma\) for any trivial loop \(\sigma\).  Otherwise \(\supp(c)\) is nonempty, and we can construct a simple loop \(e_1 e_2 \cdots \) in \(X\) as follows.   

We start by picking any edge \(e_1 \maps v_0 \to v_1\) in \(\supp(c)\) and moving to the vertex \(v_1\).   In general, when we arrive at the vertex \(v_i\), if it is not one we have visited before we pick an edge \(e_{i+1} \maps v_i \to v_{i+1}\) in \(\supp(c)\).    To see that such an edge must exist, note that since \(c\) is a cycle, the sum of the coefficients \(\langle c, e \rangle\) over edges \(e \in \supp(c)\) with \(v_i\) as source must equal the sum of these coefficients over edges in \(\supp(c)\) with \(v_i\) as target.   Since we are assuming \(v_i\) is the target of an edge in \(\supp(c)\), namely \(e_i\), it must also be the source of an edge in \(\supp(c)\). 

Since the support of \(c\) is finite, eventually we must reach a vertex we have visited before.  So, eventually \(v_i = v_j\) for some \(j < i\).  Then \(\sigma_1 = e_{j+1} \cdots e_i\) is a simple loop.   By construction we have \(\supp(c_{\sigma_1}) \subseteq \supp(c)\) as desired.   

Now let \(c_2 = c - c_{\sigma_1}\).  We claim that also  \(\supp(c_2) \subseteq \supp(c)\).  To see this, note that both \(\langle c, e \rangle\) and \(\langle c_{\sigma_1}, e\rangle\) vanish except for edges in \(\supp(c)\) and their inverses.  If \(e \in \supp(c)\) then \(\langle c, e\rangle \ge 1\), while \(\langle c_{\sigma_1}, e\rangle \le 1\).  Thus 
\( \langle c, e \rangle = 0\) implies \(\langle c_2, e \rangle = 0\), while \(\langle c, e \rangle \ge 0\) implies \(\langle c_2, e \rangle \ge 0\).    It follows that \(\supp(c_2) \subseteq \supp(c)\).

Since \(c_2\) is again an integral 1-cycle, either \(c_2 = 0\) or we can repeat the argument just given with \(c_2\) replacing \(c\).  Continuing on, we can construct 1-cycles \(c_2, c_3 \dots,\) each with support contained in the support of the previous one, and simple loops \(\sigma_1, \sigma_2, \dots\) with \(c_{k+1} = c_k - c_{\sigma_k}\) and \(\supp(c_{\sigma_k}) \subseteq \supp(c_k)\).  With each new 1-cycle the coefficient of at least one edge is less than before, so eventually \(c_k\) reaches zero and the procedure terminates.  We thus obtain simple loops \(\sigma_1, \dots, \sigma_n\) with \(c = c_{\sigma_1} + \cdots+  c_{\sigma_n}\) and \(\supp(c_{\sigma_i}) \subseteq \supp(c)\).
\end{proof}

\begin{lem} 
\label{decomposition_lemma_2}
Let \(\gamma\maps x \to y\) be a path in a graph \(X\).  Then for some \(n \ge 0\) we can write 
\[     c_\gamma = c_\delta + c_{\sigma_1} + \cdots +  c_{\sigma_n}  \]
where \(\delta\maps x \to y\) is a simple path and \(\sigma_i\) are simple loops with \(\supp(c_\delta), \supp(c_{\sigma_i}) \subseteq \supp(c_\gamma)\).
\end{lem}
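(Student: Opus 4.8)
The plan is to prove a slightly stronger statement by induction, from which the lemma is the special case $c = c_\gamma$: for any integral 1-chain $c$ on $X$ and any vertices $x, y$ with $\partial c = y - x$, there exist a simple path $\delta \maps x \to y$ and simple loops $\sigma_1, \dots, \sigma_n$ (with $n \ge 0$) such that $c = c_\delta + c_{\sigma_1} + \cdots + c_{\sigma_n}$ and $\supp(c_\delta), \supp(c_{\sigma_i}) \subseteq \supp(c)$. Since $\partial c_\gamma = y - x$, this gives Lemma \ref{decomposition_lemma_2}; note that when $x = y$ the only simple path $x \to y$ is the trivial one, so that case is exactly Lemma \ref{decomposition_lemma_1}. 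I would induct on $N(c) := \sum_{e \in \supp(c)} \langle c, e \rangle \in \Z_{\ge 0}$, which is $0$ precisely when $c = 0$. If $N(c) = 0$ then $\partial c = 0$ forces $x = y$, and we take $\delta = 1_x$, $n = 0$. If $x = y$ but $N(c) > 0$, then $c$ is a nonzero integral 1-cycle and Lemma \ref{decomposition_lemma_1} applies directly (again with $\delta = 1_x$). So it remains to treat $x \ne y$ with $N(c) > 0$.

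In that case I would run the greedy walk from the proof of Lemma \ref{decomposition_lemma_1}, but starting at $x$ rather than at an arbitrary support edge. Pick $e_1 \in \supp(c)$ with $s(e_1) = x$: one exists because $\partial c = y - x$ means the weighted out-degree of $x$ within $\supp(c)$ exceeds its weighted in-degree by $1$. Walk to $v_1 = t(e_1)$, and continue: whenever we reach a vertex $v_i \notin \{v_0 = x, \dots, v_{i-1}, y\}$, the coefficient of $v_i$ in $\partial c$ is $0$, so its weighted in- and out-degrees in $\supp(c)$ agree, and the in-degree is at least $\langle c, e_i \rangle \ge 1$, so there is an edge $e_{i+1} \in \supp(c)$ with $s(e_{i+1}) = v_i$; move along it. (Because $\langle c, e \rangle$ and $\langle c, e^{-1} \rangle$ have opposite signs, $\supp(c)$ never contains an edge together with its inverse, so we never immediately backtrack; and since $V$ is finite, the walk must stop.) Stop the first time the walk either reaches $y$ or revisits a vertex.

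If it stops by reaching $y$, then $\delta := e_1 \cdots e_i$ is a simple path $x \to y$ using only edges of $\supp(c)$, so $\supp(c_\delta) \subseteq \supp(c)$. Put $c' := c - c_\delta$. Then $\partial c' = (y - x) - (y - x) = 0$, so $c'$ is an integral 1-cycle, and $\supp(c') \subseteq \supp(c)$ by the support-containment criterion stated just before Lemma \ref{decomposition_lemma_1} (subtracting a simple path built from support edges lowers each positive coefficient by at most $1$ and leaves the zero coefficients untouched). Applying Lemma \ref{decomposition_lemma_1} to $c'$ writes $c' = c_{\sigma_1} + \cdots + c_{\sigma_n}$ with simple loops $\sigma_i$, $\supp(c_{\sigma_i}) \subseteq \supp(c') \subseteq \supp(c)$, and we are done. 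If instead it stops by revisiting a vertex, say $v_i = v_j$ with $j < i$, then $\sigma := e_{j+1} \cdots e_i$ is a simple loop with $\supp(c_\sigma) \subseteq \supp(c)$. Put $c' := c - c_\sigma$; the same criterion gives $\supp(c') \subseteq \supp(c)$, while $\partial c' = \partial c = y - x$ and $N(c') = N(c) - |\supp(c_\sigma)| < N(c)$. The inductive hypothesis applied to $c'$ then yields a decomposition, to which we prepend $\sigma$.

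The bookkeeping here (the two ``support stays inside $\supp(c)$'' checks, the termination of the walk, and the fact that $N$ strictly drops with each peeled loop so the induction is well-founded) is routine and parallels Lemma \ref{decomposition_lemma_1}. The one genuinely new point — the place where $\partial c = y - x$ rather than $\partial c = 0$ is used — is verifying that the greedy walk can always be continued until it stops: at the start vertex $x$ the conservation argument gives a strict surplus of outgoing support weight, and at each interior vertex it gives an exact balance which, together with the edge $e_i$ we arrived on, still forces an outgoing support edge. This is the step I expect to require the most care, though it is short once phrased in terms of weighted degrees within $\supp(c)$.
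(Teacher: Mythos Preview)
Your argument is correct and follows essentially the same route as the paper: a greedy walk through $\supp(c)$ starting at $x$, peeling off a simple loop and recursing whenever a vertex repeats, stopping when $y$ is reached and then applying Lemma~\ref{decomposition_lemma_1} to the leftover cycle. One small slip: the walk terminates not because $V$ is finite (the graph is not assumed finite) but because $\supp(c)$ is, which is also the justification the paper uses.
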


\begin{proof} 
Suppose that \(\gamma \maps x \to y\) is a path in \(X\).   If \(x = y\) then \(c_\gamma\) is a cycle and the lemma follows from Lemma \ref{decomposition_lemma_1}, taking \(\delta\) to be any trivial path.  Thus we assume \(x \ne y\).

We claim there is a simple path \(\delta \maps x \to y\) with \(\supp(c_\delta) \subseteq \supp(c_\gamma)\).  We can construct such a path, say \(\delta = e_1 e_2 \cdots \), as follows.  Start by setting \(k = 1\) and let \(c_k = c_\gamma\) in this case.   As we proceed we will repeatedly increment \(k\) and subtract a 1-cycle from \(c_k\) to define 
a new 1-chain \(c_{k+1}\) with \(\supp(c_{k+1}) \subseteq \supp(c_k)\).

We start at the vertex \(v_0 = x\).  In general, if \(v_i\) is neither \(y\) nor a vertex we have visited before, we pick an edge \(e_{i+1} \maps v_i \to v_{i+1}\) in \(\supp(c_k)\) and move to the vertex \(v_{i+1}\).  Such an edge must exist, because \(\partial(c_k) = y - x\), so the sum of the coefficients \(\langle c_k, e\rangle\) over edges \(e \in \supp(c_k)\) with \(v_i\) as source is greater than or equal to the sum of these coefficients over edges in \(\supp(c_k)\) with \(v_i\) as target.   

Since the support of \(c_k\) is finite, eventually \(v_i\) is either \(y\) or a vertex we have visited before.   If \(v_i = y\) we stop: by construction \(\delta = e_1 \cdots e_i\) is a simple path from \(x\) to \(y\).  Moreover, \(\supp(c_\delta) \subseteq \supp(c_k) \subseteq \supp(c_\gamma)\) as desired.

If \(v_i\) is a vertex we have visited before, say \(v_i = v_j\) for \(j < i\), then \(\sigma_k = e_{j+1} \cdots e_i\) is a simple loop.   By construction we have 
\(\supp(c_{\sigma_k}) \subseteq \supp(c_k)\).  The argument given in Lemma \ref{decomposition_lemma_1} shows that \(c_{k+1} = c_k - c_{\sigma_k}\) has \(\supp(c_{k+1}) \subseteq \supp(c_k)\).   Since \(\partial(c_{k+1}) = \partial(c_k) = y - x\), the new 1-chain \(c_{k+1}\) is still nonzero.  We may thus increment \(k\) by 1 and restart the process of building a simple path from \(x\) to \(y\).

This algorithm must eventually succeed in building a simple path \(\delta \maps x \to y\), since the sum of the coefficients of \(c_{k+1}\) is strictly less than that for \(c_k\), yet \(c_{k+1}\) can never vanish, given that \(\partial c_{k+1}= y - x \ne 0\).

If we let \(c = c_\gamma - c_\delta\) then \(c\) is a 1-cycle, and our construction ensures that \(\supp(c) \subseteq \supp(c_\gamma)\).  Thus, we can use Lemma \ref{decomposition_lemma_1} to write
\(c = c_{\sigma_1} + \cdots +  c_{\sigma_n}  \)
where \(\sigma_i\) are simple loops with \(\supp(c_{\sigma_i}) \subseteq \supp(c_\gamma)\).  This implies that
\[     c_\gamma = c_\delta + c_{\sigma_1} + \cdots +  c_{\sigma_n}  \]
where \(\delta\) and \(\sigma_i\) have the desired properties.
\end{proof}

\begin{lem}
\label{bridge_lemma}
Let \(X\) be a graph.   Then the following are equivalent:
\begin{enumerate}
\item \(X\) has no bridges.
\item For any path \(\gamma\) in \(X\), if \(c_\gamma\) is orthogonal to every 1-cycle then \(c_\gamma = 0\).
\end{enumerate}
\end{lem}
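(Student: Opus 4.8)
The plan is to prove $(2)\Rightarrow(1)$ by contraposition --- the easy half --- and $(1)\Rightarrow(2)$ directly, the substantial half, using the two decomposition lemmas together with the elementary linear-algebra fact that a $1$-chain is orthogonal to every $1$-cycle precisely when it lies in the image of the adjoint $\partial^\ast\maps C_0(X,\R)\to C_1(X,\R)$ of the boundary map. For the easy half, suppose $X$ has a bridge $e$. I would show that the $1$-chain $e$, which equals $c_\gamma$ for the length-one path $\gamma=e$, is nonzero and orthogonal to every $1$-cycle, so (2) fails. Since deleting $e$ and $e^{-1}$ raises the number of connected components, the endpoints $s(e)$ and $t(e)$ lie in different components of $X\setminus\{e,e^{-1}\}$; letting $f\maps V\to\R$ be the indicator function of the component of $s(e)$, every edge other than $e^{\pm1}$ has equal $f$-values at its two ends, so $\partial^\ast f=-e$, whence $e=-\partial^\ast f\in\im\partial^\ast=Z_1(X,\R)^\perp$.

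For the substantial half, assume $X$ has no bridges and $\gamma\maps x\to y$ is a path with $c_\gamma$ orthogonal to every $1$-cycle; I argue by contradiction from $c_\gamma\neq0$. The starting observation is that $\supp(c_\gamma)$ can contain the support of no nonzero $1$-cycle $c'$: for such a $c'$,
\[
  \langle c_\gamma,c'\rangle=\sum_{e\in\supp(c')}\langle c',e\rangle\,\langle c_\gamma,e\rangle
\]
would be a sum of strictly positive terms, contradicting orthogonality. Applying Lemma \ref{decomposition_lemma_2} gives $c_\gamma=c_\delta+c_{\sigma_1}+\cdots+c_{\sigma_n}$ with $\delta\maps x\to y$ a simple path and each $\sigma_i$ a simple loop supported inside $\supp(c_\gamma)$; since each $c_{\sigma_i}$ is a $1$-cycle, the observation forces $c_{\sigma_i}=0$, so $c_\gamma=c_\delta$. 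Because $c_\gamma\neq0$, the simple path $\delta=e_1\cdots e_\ell$ is nontrivial, so $\ell\ge1$, $x\neq y$, and its vertices $x=v_0,\dots,v_\ell=y$ are pairwise distinct; since the edges of a simple path are distinct and none is the inverse of another, $\langle c_\delta,e_i\rangle=1$ for all $i$ while $\langle c_\delta,a\rangle=0$ for every edge $a\notin\{e_1^{\pm1},\dots,e_\ell^{\pm1}\}$.

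The decisive step exploits orthogonality globally. Since $c_\gamma$ is orthogonal to $Z_1(X,\R)=\ker\partial$, we may write $c_\gamma=\partial^\ast f$ for some $f\maps V\to\R$, i.e.\ $\langle c_\gamma,e\rangle=f(t(e))-f(s(e))$ for every edge $e$. Then $f(v_i)=f(v_0)+i$ along $\delta$, so the $v_i$ have pairwise distinct $f$-values, while $f$ takes equal values at the two ends of every edge not among the $e_i^{\pm1}$; hence $f$ is constant on each connected component of the graph $Y$ obtained by deleting $e_1,\dots,e_\ell$ (and their inverses) from $X$, which forces $v_0,\dots,v_\ell$ into $\ell+1$ distinct components of $Y$. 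But, working inside the connected component of $X$ that contains $\gamma$ --- which we may do, since that component is bridgeless and carries $c_\gamma$ together with all relevant $1$-cycles --- $Y$ arises by deleting $\ell$ edges one at a time, each deletion raising the component count by at most one; so $Y$ has at most $\ell+1$ components. Equality must therefore hold, so each single deletion raises the count by exactly one, and in particular deleting $e_1$ alone disconnects $X$ --- making $e_1$ a bridge, contrary to (1). Hence $c_\gamma=0$.

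I expect the main obstacle to be precisely this last step: seeing that orthogonality to all $1$-cycles expresses $c_\gamma$ as a ``coboundary'' $\partial^\ast f$, and then extracting a bridge from the component count of $X$ after the edges of the simple path $\delta$ are removed. The remaining ingredients --- the two decomposition lemmas, the verification that $\langle c_\delta,e_i\rangle=1$, and the reduction to a connected graph --- are routine.
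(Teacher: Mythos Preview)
Your proof is correct, and both halves are sound. In particular, the potentially delicate point---writing $c_\gamma=\partial^\ast f$ in the sense that $\langle c_\gamma,e\rangle=f(t(e))-f(s(e))$ for every edge---works even without finiteness assumptions: on each connected component, fix a basepoint and set $f(v)$ equal to $\langle c_\gamma,c_\eta\rangle$ for any path $\eta$ from the basepoint to $v$; this is well-defined precisely because $c_\gamma$ is orthogonal to cycles. Your component-counting step is also fine: removing $\ell$ edges from a connected graph yields at most $\ell+1$ components, and since the $v_i$ already occupy $\ell+1$ distinct components, every single deletion must raise the count, so each $e_i$ (in particular $e_1$) is a bridge.

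Your route for $(1)\Rightarrow(2)$ genuinely differs from the paper's. Both arguments begin identically, using the decomposition lemma to reduce to $c_\gamma=c_\delta$ for a nontrivial simple path $\delta=e_1\cdots e_\ell$. From there the paper is \emph{constructive}: it uses bridgelessness of the single edge $e_\ell$ to find a path from $v_\ell$ back to some earlier $v_i$ avoiding $e_\ell$; truncating at the first $v_i$ it meets yields a path $\alpha$ disjoint from all $e_j^{\pm1}$, and the loop $\alpha\cdot(e_{i+1}\cdots e_\ell)$ is an explicit $1$-cycle with strictly positive inner product against $c_\delta$. Your argument is \emph{dual}: rather than exhibiting a cycle, you place $c_\gamma$ in the cut space $\im\partial^\ast$ and extract a bridge from a global component count. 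The paper's version buys an explicit witness cycle and avoids invoking the orthogonal decomposition $C_1=\ker\partial\oplus\im\partial^\ast$; yours is shorter and more conceptual, and it dovetails nicely with the cut-space machinery the paper deploys later in computing the packing fraction (and is close in spirit to the alternative proof the paper attributes to Speyer in its acknowledgments).
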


\begin{proof} 
To prove (2)\(\implies\)(1), we assume \(X\) has a bridge \(e\) and show \(\langle c, e \rangle = 0\) for every 1-cycle \(c\).  This will give a path \(\gamma = e\) for which 
\(c_\gamma \ne 0\) but \(c_\gamma\) is orthogonal to every 1-cycle.

Since every 1-cycle is a linear combination of 1-cycles \(c_\lambda\) coming from loops \(\lambda\), it suffices to show \(\langle c_\lambda, e \rangle = 0\) for every loop \(\lambda\).  Since \(e\) is a bridge, removing \(e\) breaks \(X\) into two connected components.   Any loop \(\lambda = e_1 \cdots e_n\) must start and end in the same component, so the number of edges \(e_i\) that equal \(e\) must equal the number of \(e_i\) that equal \(e^{-1}\).  Thus \( \langle c_\lambda, e \rangle = 0\).

To prove (1)\(\implies\)(2), suppose \(X\) has no bridges.  It suffices to show that if \(\gamma \maps x \to y\) is a path for which \(c_\gamma\) is orthogonal to every 1-cycle, then \(c_\gamma = 0\).    We use Lemma \ref{decomposition_lemma_2} to write 
\[     c_\gamma = c_\delta + c_{\sigma_1} + \cdots +  c_{\sigma_n}  \]
where \(\delta\maps x \to y\) is a simple path and \(\sigma_i\) are simple loops with \(\supp(c_\delta), \supp(c_{\sigma_i}) \subseteq \supp(c_\gamma)\).   If any of the 1-chains \(c_{\sigma_i}\) are nonzero we are done, since then
\[   \langle c_{\sigma_i}, c_\gamma \rangle \ge  \langle c_{\sigma_i}, c_{\sigma_i} \rangle > 0 \]
so \(c_\gamma\) is not orthogonal to every 1-cycle.  Thus, we assume \(c_\gamma = c_\delta\).  We assume that \(c_\delta \ne 0\), and need to construct a 1-cycle that is not
orthogonal to \(c_\delta\).   We also assume \(x \ne y\), since otherwise we can use 
\(c_\delta\) itself as the desired 1-cycle.

Write \(\delta = e_1 \cdots e_n \) with \(e_i \maps v_{i-1} \to v_i\).  These edges are distinct since \(\delta\) is a simple path, and there is at least one of them since \(x = v_0\) is not equal to \(y = v_n\).  Since the last edge \(e_n\) is not a bridge, there must be a path in \(X\) from \(v_n\) to \(v_0\) that does not include \(e_n\).  If we follow that path only as far as the first vertex \(v_i \ne v_n\) that it reaches, we obtain a path \(\alpha \maps v_n \to v_i\) that includes no edges \(e_i\), nor their inverses.  Thus, we have
\[    \langle c_\alpha, c_\delta \rangle = 0 .\]
If \(\beta = e_{i+1} \cdots e_n \) is the portion of \(\delta\) that goes from \(v_i\) to 
\(v_n\), then 
\[ \langle c_\beta, c_\delta \rangle = \langle c_\beta, c_\beta \rangle \ge 1. \]
The path \(\alpha\beta\) is a loop, so \(c_\alpha + c_\beta\) is a 1-cycle, and this
1-cycle is not orthogonal to \(c_\delta\), since
\[    \langle c_\alpha, c_\delta \rangle + 
\langle c_\beta , c_\delta\rangle \ge 1 . \qedhere \]
\end{proof}

\section{Topological crystals}
\label{sec:topological_crystals}

In the previous section we took a connected bridgeless graph \(X\) and embedded its atoms into the space of 1-cycles via a map
\[      i \maps A \to Z_1(X,\R)  .\]
These atoms are the vertices of the maximal abelian cover \(\overline{X}\), or equivalently, the vertices of its geometric realization \(|\overline{X}|\).    Now we 
extend \(i\) to an embedding 
\[       j \maps |\overline{X}| \to Z_1(X,\R) . \]
We call the image of this embedding the \define{topological crystal} associated to \(X\).

The idea is that just as \(i\) maps each atom to a point in the vector space \(Z_1(X,\R)\), \(j\) maps each edge of \(|\overline{X}|\) to a straight line segment between such points.  These line segments serve as the `bonds' of a topological crystal.  The only challenge is to show that these bonds do not cross each other.  

Let \(X\) be a connected bridgeless graph, and fix a vertex \(x_0\).  Technically, an 
atom is a homology class of paths \(\llbracket \alpha \rrbracket\) starting at \(x_0\) and 
ending at any vertex \(x\).   Given an edge \(e \maps x \to y\) in \(X\), there is a unique edge in \(\overline{X}\) from the atom \(\llbracket \alpha \rrbracket \maps x_0 \to x\) to the atom \(\llbracket \alpha e \rrbracket \maps x_0 \to y\).   Every edge of \(\overline{X}\) arises in this manner in a unique way.  So, an edge in \(\overline{X}\) amounts to a pair
\[   \llbracket \alpha \rrbracket \maps x_0 \to x, \quad e \maps x \to y .\]

As explained in Section \ref{sec:maximal_abelian_covers}, each edge in \(\overline{X}\) gives a closed interval in the geometric realization \(|\overline{X}|\), with a preferred parametrization \(\phi \maps [0,1] \to |\overline{X}|\).   The inverse of an edge gives the same closed interval with the reverse parametrization, \(1 - \phi\).   We call these closed
intervals \define{bonds} to distinguish from the edges of an abstract graph. 

Given a finite-dimensional vector space \(V\), we say a map \( f \maps |\overline{X}| \to V\) is \define{affine on bonds} if for any given bond the composite map \(f \circ \phi \maps [0,1] \to V\) is affine, meaning
\[       f(\phi(t)) = a + b t \] 
for some \(a,b \in V\).  It follows that \(f\) maps each bond to a straight line segment in \(V\).  It also follows that \(f\) is continuous.   Note also that given \(f\) defined on the atoms, there exists a unique map extending \(f\) that is affine on bonds.   The reason is that an affine map from \([0,1]\) to \(V\) is determined by its values at the endpoints.

\begin{thm} 
\label{embedding_theorem}
If \(X\) is a connected graph, the map \(i \maps A \to Z_1(X,\R)\) extends uniquely to a map 
\[   j \maps |\overline{X}| \to Z_1(X,\R) \] 
that is affine on bonds.  If \(X\) is also bridgeless, then \(j\) is one-to-one.
\end{thm}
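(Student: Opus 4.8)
The existence and uniqueness of \(j\) require no assumption on \(X\) and are essentially formal. By Lemma~\ref{chain_lemma} homologous paths give the same \(1\)-chain, so \(i(\llbracket\alpha\rrbracket)=\pi(c_\alpha)\) is well defined on atoms. An edge of \(\overline X\) is a pair consisting of an atom \(\llbracket\alpha\rrbracket\maps x_0\to x\) and an edge \(e\maps x\to y\) of \(X\); its endpoints are the atoms \(\llbracket\alpha\rrbracket\) and \(\llbracket\alpha e\rrbracket\), which \(i\) sends to \(\pi(c_\alpha)\) and \(\pi(c_\alpha)+\pi(e)\) because \(c_{\alpha e}=c_\alpha+e\). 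Since an affine map \([0,1]\to Z_1(X,\R)\) is determined by its two endpoint values, \(i\) extends in exactly one way to a map \(j\) that is affine on bonds: it sends the bond above to the segment \(\{\,\pi(c_\alpha)+u\,\pi(e) : u\in[0,1]\,\}\), parametrised linearly by \(u\).

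Now assume \(X\) is also bridgeless. First note that every bond maps to a nondegenerate segment: if \(\pi(e)=0\) for some edge \(e\), then \(c_e=e\) would be a nonzero \(1\)-chain orthogonal to every \(1\)-cycle, contradicting Lemma~\ref{bridge_lemma}; hence \(\pi(e)\neq 0\) and \(j\) is injective on each closed bond. To prove \(j\) is globally one-to-one, suppose \(j(p)=j(q)\); write \(p\) as the point at parameter \(t\in[0,1]\) on a bond \((\llbracket\alpha\rrbracket\maps x_0\to x,\ e\maps x\to y)\) and \(q\) as the point at parameter \(s\in[0,1]\) on a bond \((\llbracket\beta\rrbracket\maps x_0\to u,\ f\maps u\to w)\). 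If \(t,s\in\{0,1\}\) then \(p\) and \(q\) are atoms and \(p=q\) follows from Theorem~\ref{atom_thm} (which is where bridgelessness enters), so assume at least one of \(t,s\) lies in \((0,1)\). The hypothesis \(j(p)=j(q)\) reads \(\pi(c_\alpha)+t\,\pi(e)=\pi(c_\beta)+s\,\pi(f)\), equivalently
\[
  \pi(c_\gamma)\;=\;s\,\pi(f)-t\,\pi(e),\qquad \gamma:=\beta^{-1}\alpha\maps u\to x,\quad c_\gamma=c_\alpha-c_\beta .
\]

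The plan for this equation is to apply Lemma~\ref{decomposition_lemma_2} to \(\gamma\), writing \(c_\gamma=c_\delta+c_{\sigma_1}+\cdots+c_{\sigma_n}\) with \(\delta\maps u\to x\) a simple path and the \(\sigma_i\) simple loops, all of whose supports lie inside \(\supp(c_\gamma)\). The essential feature is the rigidity forced by these support conditions. Since no new edges and no sign reversals are introduced, each integral \(1\)-cycle \(c_{\sigma_i}\) satisfies \(\langle c_{\sigma_i},\pi(c_\gamma)\rangle=\langle c_{\sigma_i},c_\gamma\rangle\ge\|c_{\sigma_i}\|^2\); on the other hand \(s\,\pi(f)-t\,\pi(e)\) pairs with any simple loop to a number of absolute value at most \(s+t\le2\), because a simple loop has all coefficients in \(\{0,\pm1\}\). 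Using this, the facts that \(\|c_{\sigma_i}\|^2\) is a nonnegative integer and that \(\|\pi(e)\|^2,\|\pi(f)\|^2\le1\) with equality only for loop edges, and --- when \(\delta\) is nontrivial --- the detecting \(1\)-cycle \(c\) with \(\langle c,c_\delta\rangle\ge1\) produced from bridgelessness exactly as in the proof of Lemma~\ref{bridge_lemma}, I would force \(c_\gamma\) (and with it the atoms \(\llbracket\alpha\rrbracket,\llbracket\beta\rrbracket\), the edges \(e,f\), and the parameters \(s,t\)) into one of a short list of degenerate configurations --- \(c_\gamma=0,\ f,\ -e,\ f-e,\dots\) --- each of which says precisely that \(p\) and \(q\) are the same point of \(|\overline X|\).

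I expect this last analysis to contain essentially all the difficulty. The fractional edge contributions \(t\,\pi(e)\) and \(s\,\pi(f)\) sit awkwardly against the integral decomposition from Lemma~\ref{decomposition_lemma_2}, so the rigidity estimates must be arranged with care, and one must dispose separately of loop edges, of the cases \(e=f\) and \(e=f^{-1}\), and of the choice of which of the two representations \((\llbracket\alpha\rrbracket,e)\) or \((\llbracket\alpha e\rrbracket,e^{-1})\) of a bond to use so that the estimates close. Everything else --- existence and uniqueness of \(j\), nondegeneracy of bonds, and the reduction to the displayed equation --- is routine.
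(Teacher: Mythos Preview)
Your setup is right up to the displayed equation, but the proposal is not a proof: the hard step is only sketched, and the sketch does not close. Your inequality \(\|c_{\sigma_i}\|^2\le \langle c_{\sigma_i},\,s\pi(f)-t\pi(e)\rangle\le s+t\) rules out simple loops of length \(\ge 2\), but in this graph model self-loops \(e\maps v\to v\) are allowed, giving \(\|c_{\sigma_i}\|^2=1\) while \(s+t\) can exceed \(1\); and even after that, your treatment of the simple path \(\delta\) would need a cycle that pairs positively with \(c_\delta\) \emph{and} is orthogonal to the fractional remainder \(te-sf\), which Lemma~\ref{bridge_lemma} does not supply. So the ``short list of degenerate configurations'' is neither produced nor shown to be short.

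The paper avoids all of this with one trick you are missing. Set \(c=c_\alpha-c_\beta+te-sf\), so that \(j(p)=j(q)\) means \(\pi(c)=0\). Do \emph{not} decompose \(\beta^{-1}\alpha\); instead form \(\gamma\) by prepending \(f^{-1}\) and/or appending \(e\) to \(\beta^{-1}\alpha\), choosing each according to the sign of \(\langle c,-f\rangle\) and \(\langle c,e\rangle\), so that \(\supp(c_\gamma)=\supp(c)\). Now Lemma~\ref{decomposition_lemma_2} gives \(c_\gamma=c_\delta+\sum c_{\sigma_i}\) with all supports inside \(\supp(c)\). Any nonzero \(c_{\sigma_i}\) then satisfies \(\langle c,c_{\sigma_i}\rangle>0\) purely by support containment, contradicting \(c\perp Z_1\); no numerical estimate is needed. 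Thus \(c_\gamma=c_\delta\), and if \(\delta\) has an edge the bridgeless argument of Lemma~\ref{bridge_lemma} produces a cycle with \(\langle c,\cdot\rangle>0\), again a contradiction. Hence \(\supp(c)=\supp(c_\gamma)=\varnothing\), i.e.\ \(c=0\), i.e.\ \(c_p=c_q\). The paper then disposes of \(c_p=c_q\Rightarrow p=q\) separately by reducing modulo \(C_1(X,\Z)\): the fractional parts force \(f\in\{e,e^{-1}\}\) and \(s\in\{t,1-t\}\), after which Lemma~\ref{chain_lemma} finishes. Absorbing the extra edges into \(\gamma\) to align supports with \(c\) is exactly the idea that replaces your unfinished case analysis.
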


\begin{proof}
As mentioned, the existence and uniqueness of an extension \(j\) that is affine on bonds is automatic.  The task is to show that \(j\) is an embedding when \(X\) has no bridges.

Each bond in \(\overline{X}\) arises from a pair 
\[   \llbracket \alpha \rrbracket \maps x_0 \to x, \quad e \maps x \to y ,\]
and the points on this bond are parametrized by numbers \(t \in [0,1]\).  Thus, 
we can describe any point \(p \in |\overline{X}|\) as a triple 
\[ \llbracket\alpha\rrbracket \maps x_0 \to x,  \quad e \maps x \to y, \quad t \in [0,1].\]     
However, there is some redundancy in this description.  If \(t = 0\) then \(p\) is
an atom, and in this case \(e\) becomes irrelevant: any choice of
\(e \maps x \to y\) gives the same point \(p\).  In general, any point described by the
triple 
\[ \llbracket\alpha\rrbracket \maps x_0 \to x,  \quad e \maps x \to y, \quad t \in [0,1] \]
is also described by the triple
\[ \llbracket\alpha e \rrbracket \maps x_0 \to y,  \quad e^{-1} \maps y \to x, \quad 1-t \in [0,1].\]
If \(t \in (0,1)\), the point \(p\) is not an atom, and the above triples are the only two
that describe it.

Suppose we have a point \(p \in |\overline{X}|\) corresponding to some triple
\(\llbracket\alpha\rrbracket \maps x_0 \to x\), \(e \maps x \to y\), \(t \in [0,1]\). We define a 1-chain \(c_p \in C_1(X,\R)\) by
\[   c_p = c_\alpha + t e .\]
Thanks to Lemma \ref{chain_lemma}, \(c_p\) is independent of the choice of path \(\alpha\) representing the atom \(\llbracket\alpha\rrbracket\).   Furthermore, different triples describing the same point \(p\) give the same result for \(c_p\).  Thus, \(c_p\) 
is well-defined.  

Define
\[   j \maps |\overline{X}| \to Z_1(X,\R)  \]
by
\[    j(p) = \pi(c_p) .\]
This equals \(i(p)\) when \(p\) is an atom, since \(c_p = c_\alpha\) when \(p\) is the atom \(\llbracket \alpha \rrbracket \).  The map \(j\) is affine on each bond since \(c_p\) depends affinely on the parameter \(t\) and \(\pi\) is linear.  So, \(j\) is the unique
map equalling \(i\) on atoms that is affine on each bond.  

To prove the theorem, it suffices to show that if \(p\) and \(q\) are points in 
\(\overline{X}\) such that \(c_p - c_q\) is orthogonal to every 1-cycle, then \(p = q\).    
To do this, first we show that \(c_p = c_q\) implies \(p = q\).  Then we show
that if \(c_p - c_q\) is orthogonal to every 1-cycle then \(c_p = c_q\).

Suppose \(c_p = c_q\).  These 1-chains are integral if and only if \(p\) and \(q\) are atoms, in which case \(p = q\) by Lemma \ref{chain_lemma}.   So, we may assume that neither 1-chain is integral and neither \(p\) nor \(q\) is an atom.  Thus, we may 
write \(p\) as a triple 
\[ \llbracket\alpha\rrbracket,  \quad e , \quad t \in (0,1) \]
and \(q\) as a triple
\[ \llbracket\beta\rrbracket,  \quad f , \quad u \in (0,1) , \]
implying
\[   c_\alpha + te = c_\beta + uf .\]
Working modulo integral 1-chains we have 
\[  te = uf  \bmod C_1(X,\Z),  \]
so either \(f = e\) and \(u = t\) or \(f = e^{-1}\) and \(u = 1-t\).
In the first case we have
\[   c_\alpha = c_\beta  \]
so \(\llbracket \alpha \rrbracket = \llbracket \beta \rrbracket \) by Lemma \ref{chain_lemma}, and thus \(p = q\).  In the second case we have 
\[   c_\alpha + e = c_\beta \]
so \(\llbracket \alpha e \rrbracket = \llbracket \beta \rrbracket \) by Lemma \ref{chain_lemma}, and again \(p = q\).

Next suppose that \(c_p - c_q\) is orthogonal to every 1-cycle.  Write
\[     c = c_p - c_q \]
and note that
\[     c = c_\alpha - c_\beta + t e - u f .\]
We need to prove that if \(c\) is orthogonal to every 1-cycle then \(p = q\).

Form a path \(\gamma\) by either composing \(\beta^{-1}\alpha\) with the 
edges \(f^{-1}\) and \(e\), or not, in such a way that the coefficient of every edge in 
\(c_{\gamma}\) agrees in sign with its coefficient in \(c\).   More precisely, let
\[  \gamma = \left\{\begin{array}{cc} 
f^{-1} \beta^{-1}\alpha e    & \textrm{if } 
\langle c, -f \rangle > 0 \textrm{ and } \langle c, e \rangle > 0 \\
f^{-1} \beta^{-1}\alpha       & \textrm{if } 
 \langle c, -f \rangle > 0 \textrm{ and } \langle c, e \rangle \le 0 
\\ 
\beta^{-1}\alpha e      &  \textrm{if } 
\langle c, -f \rangle \le 0 \textrm{ and } \langle c, e \rangle > 0 
\\ 
\beta^{-1}\alpha         & \textrm{if } 
\langle c, -f \rangle \le 0 \textrm{ and } \langle c, e \rangle \le 0 .
\end{array} \right.
\] 
Thus \(\supp(c_\gamma) = \supp(c)\).     Next, using Lemma \ref{decomposition_lemma_2}, write
\[     c_\gamma = c_\delta + c_{\sigma_1} + \cdots +  c_{\sigma_n}  \]
where \(\delta\maps x \to y\) is a simple path and \(\sigma_i\) are simple loops with \(\supp(c_\delta), \supp(c_{\sigma_i}) \subseteq \supp(c_\gamma)\).

If any 1-cycle \(c_{\sigma_i}\) is nonzero, then since \(\supp(c_{\sigma_i}) \subseteq
\supp(c_\gamma) = \supp(c)\) we have
\[    \langle c, c_{\sigma_i} \rangle  > 0 ,\]
contradicting our assumption that \(c\) is orthogonal to every 1-cycle.   We thus assume there are no nonzero 1-cycles \(c_{\sigma_i}\).  This implies \(c_\gamma = c_\delta\).  
If \(\delta\) is a path with no edges, then \(c_\gamma = 0\).  Since \(\supp(c_\gamma) = \supp(c)\), this implies \(c = 0\) as desired.  

We are left with the case where \(\delta\) has at least one edge.  However, we shall see that this leads to a contradiction.  Write \(\delta = e_1 \cdots e_n \) with \(e_i \maps v_{i-1} \to v_i\).  There is at least one of these edges, and they are distinct since \(\delta\) is a simple path.  Since the last edge \(e_n\) is not a bridge, there must be a path in \(X\) from \(v_n\) to \(v_0\) that does not include \(e_n\).  If we follow that path only as far as the first vertex \(v_i \ne v_n\) that it reaches, we obtain a path \(\alpha \maps v_n \to v_i\) that includes no edges \(e_i\), nor their inverses.  Since \(\supp(c) = \supp(c_\delta)\), this implies
\[    \langle c_\alpha, c \rangle = 0 .\]
If \(\beta = e_{i+1} \cdots e_n \) is the portion of \(\delta\) that goes from \(v_i\) to 
\(v_n\), then \( \langle c_\beta, c_\delta \rangle \ge 1 \), and since \(\supp(c) = \supp(c_\delta)\), we have
\[  \langle c_\beta, c \rangle > 0 .\]
Since  \(\alpha\beta\) is a loop, \(c_\alpha + c_\beta\) is a 1-cycle, and
\[    \langle c_\alpha + c_\beta , c \rangle > 0 .\]
This contradicts our assumption that \(c\) is orthogonal to every 1-cycle.
\end{proof}

\section{Symmetries} 
\label{sec:symmetries}

Every bridgeless graph gives a topological crystal.  How are the graph's symmetries related to those of its crystal?  To tackle this, we start by asking how the symmetries of a graph 
\(X\) are related to those of its maximal abelian cover 
\[      q \maps \overline{X} \to X  .\]
We shall not study all the symmetries of \(\overline{X}\), only those that cover symmetries 
of \(X\).  These form a group \(\Cov(X)\).  As we shall see, this group contains \(H_1(X,
\Z)\), which acts as deck transformations of \(\overline{X}\).  The group \(\Cov(X)\) also 
maps onto the group of symmetries of \(X\): in other words, every symmetry of \(X\) can 
be covered by some symmetry of \(\overline{X}\). 

In fact, we shall prove that \(\Cov(X)\) is an extension of the symmetry group of \(X\) by \(H_1(X,\Z)\).  We also prove that \(\Cov(X)\) acts on the vector space \(Z_1(X,\R)\) as affine isometries in a way that preserves the topological crystal embedded in this space.

Define an \define{automorphism} of a graph \(X\) to be a map of graphs \(f \maps X \to X\) that has an inverse.   The automorphisms of \(X\) form a group \(\Aut(X)\).   We say an automorphism \(g \in \overline{X} \to \overline{X}\) \define{covers} an automorphism \(f \maps X \to X\) if 
\[         q \circ g = f \circ q .\]
There is a group \(\Cov(X)\) where an element is an automorphism of \(\overline{X}\) that covers some automorphism of \(X\).  We call elements of \(\Cov(X)\) \define{covering symmetries}.  Since \(q\) is onto, any covering symmetry covers at most one automorphism of \(X\).  There is thus a map
\[    \psi \maps \Cov(X) \to \Aut(X)  \]
sending any \(g \in \Cov(X)\) to the automorphism \(f \in \Aut(X)\) that it covers.
It is easy to check that \(\psi\) is a group homomorphism.

\begin{lem}
\label{covering_lemma_1}
Let \(X\) be a connected graph with basepoint.  The homomorphism \(\psi\) is onto 
and its kernel is \(H_1(X,\Z)\), so we have a short exact sequence
\[       1 \longrightarrow H_1(X,\Z) \longrightarrow \Cov(X) \stackrel{\psi}{\longrightarrow} \Aut(X) \longrightarrow 1 .\]
\end{lem}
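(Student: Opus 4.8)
The plan is to prove the two assertions — that $\psi$ is onto, and that $\ker\psi = H_1(X,\Z)$ — separately, and then read off exactness of the displayed sequence.

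Computing the kernel is the easy half. An element $g\in\Cov(X)$ lies in $\ker\psi$ exactly when the automorphism of $X$ it covers is the identity, i.e.\ when $q\circ g = q$; but this is precisely the condition that $g$ be a deck transformation of the cover $q\maps\overline{X}\to X$. As recalled in Section~\ref{sec:maximal_abelian_covers}, the group of such deck transformations is the abelianization of $\pi_1(X)$, namely $H_1(X,\Z)$, with $\llbracket\gamma\rrbracket\in H_1(X,\Z)$ acting on atoms by $\llbracket\delta\rrbracket\mapsto\llbracket\gamma\rrbracket\llbracket\delta\rrbracket$. Evaluating this action on the trivial atom $\llbracket 1_{x_0}\rrbracket$ shows it is faithful, so it realizes $H_1(X,\Z)$ as a subgroup of $\Cov(X)$ whose image is exactly $\ker\psi$. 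This gives exactness at $H_1(X,\Z)$ and at $\Cov(X)$.

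For surjectivity, given $f\in\Aut(X)$ I would construct an explicit covering symmetry over it. Since $X$ is connected, pick a path $\eta\maps x_0\to f(x_0)$. Define $g$ on atoms by $g(\llbracket\gamma\rrbracket)=\llbracket\eta\,f(\gamma)\rrbracket$ for $\gamma\maps x_0\to x$, and on bonds by sending the bond lying over an edge $e$ and having source $\llbracket\gamma\rrbracket$ to the bond lying over $f(e)$ and having source $g(\llbracket\gamma\rrbracket)$. Two things then need checking. First, $g$ is well defined on atoms: since $f$ is a graph automorphism it carries each of the two elementary moves defining the homology relation on paths to a move of the same type, so $\gamma$ homologous to $\gamma'$ implies $\eta\,f(\gamma)$ homologous to $\eta\,f(\gamma')$. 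Second, $g$ is a map of graphs and satisfies $q\circ g=f\circ q$: this is a direct check that sources, targets, and inverses of bonds go where they should, using $\llbracket(\eta\,f(\gamma))f(e)\rrbracket=\llbracket\eta\,f(\gamma e)\rrbracket=g(\llbracket\gamma e\rrbracket)$. Hence $g$ covers $f$, and $\psi(g)=f$ once we know $g$ is invertible.

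The one substantive point is that $g$ is invertible. For this I would repeat the construction with $f^{-1}$ in place of $f$, choosing an auxiliary path $\eta'\maps x_0\to f^{-1}(x_0)$, to get a graph map $g'$, and then compute the composites on atoms: $(g'\circ g)(\llbracket\gamma\rrbracket)=\llbracket\eta'\,f^{-1}(\eta)\,\gamma\rrbracket$, where $\lambda:=\eta'\,f^{-1}(\eta)$ is a loop based at $x_0$, so $g'\circ g$ is the deck transformation attached to $\llbracket\lambda\rrbracket\in H_1(X,\Z)$ and in particular a bijection; symmetrically $g\circ g'$ is a bijection. Therefore $g$ is a bijective map of graphs, hence an automorphism of $\overline{X}$ lying in $\Cov(X)$ with $\psi(g)=f$. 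Combined with the kernel computation this establishes the short exact sequence, so $\Cov(X)$ is an extension of $\Aut(X)$ by $H_1(X,\Z)$. I expect the invertibility step to be the main obstacle — it is the place where the deck-transformation description recalled in Section~\ref{sec:maximal_abelian_covers} is actually used — while the well-definedness and functoriality checks are routine once one keeps track of the fact that $f$ need not fix the basepoint.
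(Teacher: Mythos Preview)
Your proof is correct and follows essentially the same route as the paper: identify $\ker\psi$ with the deck transformation group $H_1(X,\Z)$, and for surjectivity choose a path $\eta\maps x_0\to f(x_0)$ and define $g(\llbracket\gamma\rrbracket)=\llbracket\eta\,f(\gamma)\rrbracket$ on atoms and the corresponding map on bonds. The paper simply asserts that the resulting $g$ is one-to-one and onto on vertices and edges, whereas you supply an explicit argument for invertibility by building a lift of $f^{-1}$ and checking that the composites are deck transformations; this is a genuine addition of rigor but not a different idea.
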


\begin{proof}
An element \(g \in \Cov(X)\) is in the kernel of \(\psi\) if and only if it covers the identity map, which means that it is a deck transformation of the maximal abelian cover of \(X\).  In Section \ref{sec:maximal_abelian_covers} we saw that the group of these deck transformations is \(H_1(X,\Z)\).  

To show that \(\psi\) is onto, we choose any automorphism \(f \maps X \to X\) and find
an automorphism \(g \maps \overline{X} \to \overline{X}\) that covers
\(f\).   We need to describe how \(g\) acts on vertices and edges.  A vertex of 
\(\overline{X}\) is an atom, that is, a homology class of paths in \(X\) starting from the basepoint, say \(\llbracket\alpha\rrbracket \maps x_0 \to x\).  The automorphism \(f\) sends any path \(\alpha \maps x_0 \to x\) to a path we call \(f(\alpha) \maps f(x_0) \to f(x)\).  Unfortunately this path no longer defines an atom unless \(f(x_0) = x_0\).   To deal with this, arbitrarily choose a path  \(\beta \maps x_0 \to f(x_0)\) and let
\[  g(\llbracket\alpha\rrbracket) = \llbracket \beta f(\alpha) \rrbracket \maps x_0 \to 
f(x) .\]
An edge \(\overline{X}\) is a pair
\[   \llbracket \alpha \rrbracket \maps x_0 \to x, \quad e \maps x \to y .\]
We let \(g\) map this edge to the edge
\[   g(\llbracket \alpha \rrbracket) \maps x_0 \to f(x), \quad f(e) \maps f(x) \to f(y) .\]

One can check that \(g\) is a map of graphs.  Since it is one-to-one and onto on both vertices and edges, it is an automorphism.    One can also check that \(g\) covers 
\(f\), since the definition of the projection \(q \maps \overline{X} \to X\) implies
that
\[    q( \llbracket \alpha \rrbracket) = x \]
and 
\[    q(g(\llbracket \alpha \rrbracket)) = f(x)  .  \qedhere\] 
\end{proof}

The proof of the above lemma gives an explicit description of the group 
\(\Cov(X)\) and its action on \(\overline{X}\).  In the proof, two choices of 
\(\beta \maps x_0 \to f(x_0)\) define the same automorphism of \(\overline{X}\) if 
and only if they are homologous. A homology class of paths \(\llbracket\beta\rrbracket \maps x_0 \to f(x_0)\) is just an atom mapping to \(f(x_0)\) via the projection 
\(q \maps \overline{X} \to X\).  Thus, we have:

\begin{lem}
\label{covering_lemma_2}
Elements of \(\Cov(X)\) are in one-to-one correspondence with pairs \((f,\llbracket\beta\rrbracket)\) where \(f \in \Aut(X)\) and \(\llbracket \beta \rrbracket \maps x_0 \to f(x_0)\).  Such a pair acts on \(\overline{X}\) by mapping each atom 
\[   \llbracket \alpha \rrbracket \maps x_0 \to x\]
to the atom
\[   \llbracket \beta f(\alpha) \rrbracket \maps x_0 \to f(x) .\]
and each edge
\[   \llbracket \alpha \rrbracket \maps x_0 \to x, \quad e \maps x \to y \]
to the edge
\[   \llbracket \beta f(\alpha) \rrbracket \maps x_0 \to f(x), \quad f(e) \maps f(x) \to f(y) .\qedhere \] 
\end{lem}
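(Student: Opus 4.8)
The statement is essentially a bookkeeping corollary of the explicit construction carried out in the proof of Lemma~\ref{covering_lemma_1}, so the plan is to make three assertions precise and verify each. First I would show that the assignment $(f,\llbracket\beta\rrbracket)\mapsto g$, where $g$ is the automorphism of $\overline{X}$ defined by the two displayed formulas, is a well-defined map into $\Cov(X)$. Well-definedness on atoms uses that $g(\llbracket\alpha\rrbracket)=\llbracket\beta f(\alpha)\rrbracket$ depends only on the homology classes $\llbracket\alpha\rrbracket$ and $\llbracket\beta\rrbracket$: if $\alpha\sim\alpha'$ then $f(\alpha)\sim f(\alpha')$ since $f$ carries the two homology moves (cancelling $e_i e_i^{-1}$, and swapping composable loops at a common vertex) to moves of the same type, and concatenating on the left with $\beta$ preserves being homologous; similarly replacing $\beta$ by a homologous path changes $\beta f(\alpha)$ only up to homology. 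That $g$ is a graph map, is bijective on vertices and edges, and covers $f$ is exactly what was checked inside the proof of Lemma~\ref{covering_lemma_1}, so $g\in\Cov(X)$.

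Second, I would show this map is surjective onto $\Cov(X)$. Given any $h\in\Cov(X)$, let $f=\psi(h)\in\Aut(X)$ be the automorphism it covers. The basepoint atom $\llbracket 1_{x_0}\rrbracket\maps x_0\to x_0$ is sent by $h$ to some atom $h(\llbracket 1_{x_0}\rrbracket)$, which projects under $q$ to $f(x_0)$ since $h$ covers $f$; call this atom $\llbracket\beta\rrbracket\maps x_0\to f(x_0)$. Let $g$ be the covering symmetry built from the pair $(f,\llbracket\beta\rrbracket)$ as above. Then $g$ and $h$ both cover $f$, so $g^{-1}h$ covers the identity and hence lies in $H_1(X,\Z)$, acting as the deck transformation for some homology class $\llbracket\lambda\rrbracket\maps x_0\to x_0$; but $g^{-1}h$ fixes $\llbracket 1_{x_0}\rrbracket$ by construction, and a deck transformation fixing one vertex is the identity (the action of $H_1(X,\Z)$ on the fibre over $x_0$ is free and transitive). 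Hence $h=g$.

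Third, injectivity: if $(f,\llbracket\beta\rrbracket)$ and $(f',\llbracket\beta'\rrbracket)$ give the same automorphism $g$ of $\overline{X}$, then applying the projection and using $q\circ g=f\circ q=f'\circ q$ together with surjectivity of $q$ gives $f=f'$; evaluating $g$ at the basepoint atom $\llbracket 1_{x_0}\rrbracket$ gives $\llbracket\beta\rrbracket=\llbracket\beta f(1_{x_0})\rrbracket=\llbracket\beta' f(1_{x_0})\rrbracket=\llbracket\beta'\rrbracket$. The formula for the action on edges then follows because it is already forced: an edge of $\overline{X}$ is determined by its source atom together with the downstairs edge $e\maps x\to y$, and a graph automorphism covering $f$ must send it to the edge with source $g(\llbracket\alpha\rrbracket)$ lying over $f(e)$. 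I do not expect any serious obstacle here; the only point requiring a little care is the claim that a nontrivial deck transformation of $\overline{X}$ moves every vertex, i.e. the $H_1(X,\Z)$-action on $\overline{X}$ is free, which is standard for covering spaces and was implicitly used already in Section~\ref{sec:maximal_abelian_covers} when identifying $H_1(X,\Z)$ with the fibre over $x_0$.
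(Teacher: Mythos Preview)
Your proposal is correct and follows essentially the same approach as the paper. In fact the paper gives no separate proof at all: it treats the lemma as an immediate corollary of the construction in Lemma~\ref{covering_lemma_1}, remarking only that two choices of $\beta\maps x_0\to f(x_0)$ define the same automorphism of $\overline{X}$ if and only if they are homologous, and then placing a \verb|\qedhere| at the end of the statement. Your write-up simply makes explicit the well-definedness, surjectivity (via the deck-transformation-fixing-a-point argument), and injectivity steps that the paper leaves implicit.
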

 
There may in principle be other automorphisms of \(\overline{X}\) not lying in the group 
\(\Cov(X)\); we have not found examples.  Automorphisms in \(\Cov(X)\) have the advantage that they also act naturally as affine transformations of the vector space 
\(Z_1(X,\R)\).  Furthermore, this space has an inner product
coming from the inner product on 1-chains, which in turn defines
a metric.  The group \(\Cov(X)\) acts as \define{affine isometries} of \(Z_1(X,\R)\), that 
is, affine transformations that preserve this metric:

\begin{thm}
\label{symmetry_theorem_1}
There exists a unique action \(\rho\) of \(\Cov(X)\) as affine transformations of \(Z_1(X,\R)\) for which the embedding \(i \maps A \to Z_1(X,\R)\) is equivariant, meaning that
\[            \rho(g) i(\llbracket \alpha \rrbracket)= i(g\llbracket \alpha \rrbracket) \]
for all \(g \in \Cov(X)\) and \(\llbracket \alpha \rrbracket \in A.\)    Moreover, the transformations \(\rho(g)\) are affine isometries. 
\end{thm}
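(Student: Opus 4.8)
The plan is to use the explicit description of $\Cov(X)$ from Lemma \ref{covering_lemma_2}: an element $g$ is a pair $(f, \llbracket \beta \rrbracket)$ with $f \in \Aut(X)$ and $\llbracket \beta \rrbracket \maps x_0 \to f(x_0)$, acting on atoms by $\llbracket \alpha \rrbracket \mapsto \llbracket \beta f(\alpha) \rrbracket$. First I would build a candidate affine transformation $\rho(g)$ of $Z_1(X,\R)$ from two ingredients. The automorphism $f$ induces a linear map $f_* \maps C_1(X,\R) \to C_1(X,\R)$ sending each edge $e$ to $f(e)$; since edges form an orthonormal basis and $f$ permutes them (with inverses going to inverses), $f_*$ is a linear isometry of $C_1(X,\R)$. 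It commutes with the boundary map $\partial$ because $f$ preserves sources and targets, so $f_*$ restricts to a linear isometry of $Z_1(X,\R)$, and because $\pi$ is the orthogonal projection onto $Z_1(X,\R)$, $f_*$ also commutes with $\pi$. The second ingredient is the translation by the 1-cycle $\pi(c_\beta)$. I would then define
\[
  \rho(g)(z) = f_*(z) + \pi(c_\beta), \qquad z \in Z_1(X,\R),
\]
which is manifestly an affine isometry since $f_*$ is a linear isometry and translations are isometries.

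Next I would check equivariance. For an atom $\llbracket \alpha \rrbracket \maps x_0 \to x$ we have $i(\llbracket \alpha \rrbracket) = \pi(c_\alpha)$, and $g \llbracket \alpha \rrbracket = \llbracket \beta f(\alpha) \rrbracket$, so $i(g\llbracket \alpha \rrbracket) = \pi(c_{\beta f(\alpha)}) = \pi(c_\beta + c_{f(\alpha)}) = \pi(c_\beta) + \pi(f_*(c_\alpha)) = \pi(c_\beta) + f_*(\pi(c_\alpha)) = \rho(g)(i(\llbracket \alpha \rrbracket))$, using $c_{f(\alpha)} = f_*(c_\alpha)$ (immediate from the definitions of $f_*$ and $c_\gamma$) and the fact that $f_*$ commutes with $\pi$. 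This is the crux of the argument and it is essentially a short computation once the commutation of $f_*$ with $\pi$ and $\partial$ is in hand.

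Then I would verify that $\rho$ is a group homomorphism, i.e. $\rho(g g') = \rho(g)\rho(g')$, using the composition law for pairs $(f,\llbracket\beta\rrbracket)$ implicit in Lemma \ref{covering_lemma_2} — composing $(f,\llbracket\beta\rrbracket)$ after $(f',\llbracket\beta'\rrbracket)$ sends $\llbracket\alpha\rrbracket$ to $\llbracket \beta f(\beta' f'(\alpha))\rrbracket = \llbracket \beta\, f(\beta')\, f(f'(\alpha))\rrbracket$, so the composite pair is $(f f', \llbracket \beta f(\beta')\rrbracket)$. On the linear side $(ff')_* = f_* f'_*$, and on the translation side the new translation vector is $\pi(c_\beta) + \pi(c_{f(\beta')}) = \pi(c_\beta) + f_*(\pi(c_{\beta'}))$, which is exactly what $\rho(g)\rho(g')$ produces when applied to a point. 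Checking $\rho(1)=\mathrm{id}$ is trivial.

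Finally, uniqueness: any affine action making $i$ equivariant is determined on the set $i(A)$, and since $X$ is connected, $A$ is nonempty and $i(A)$ contains the image of the basepoint together with, for each edge $e$ incident to the relevant vertices, a point differing from a neighbouring atom's image by $\pi(e)$; as $e$ ranges over all edges these $\pi(e)$ span $Z_1(X,\R)$ (indeed $\pi(C_1(X,\R)) = Z_1(X,\R)$), so $i(A)$ affinely spans $Z_1(X,\R)$, and an affine map is determined by its values on an affinely spanning set. Hence $\rho(g)$ is forced. The only mild subtlety — the main obstacle, such as it is — is being careful that $\rho(g)$ is well-defined independent of the choice of representative path $\beta$ for $\llbracket\beta\rrbracket$: two choices $\beta, \beta'$ are homologous, so $c_\beta = c_{\beta'}$ by Lemma \ref{chain_lemma}, hence $\pi(c_\beta) = \pi(c_{\beta'})$ and the translation vector is unambiguous; similarly one should note that $f_*$ and $c_\alpha$ are visibly independent of representatives. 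Everything else is routine bookkeeping.
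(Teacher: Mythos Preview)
Your proof is correct and follows essentially the same approach as the paper: define \(\rho(g)(z) = f_*(z) + \pi(c_\beta)\), verify equivariance via the commutation \(f_*\pi = \pi f_*\), and conclude that \(\rho(g)\) is an affine isometry. The only minor differences are that the paper argues uniqueness by observing \(Z_1(X,\Z) \subseteq i(A)\) (rather than your span-by-\(\pi(e)\) argument), and it checks \(\rho(gh)=\rho(g)\rho(h)\) directly on \(i(A)\) using the already-established equivariance rather than computing the group law on pairs; both variations are inessential.
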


\begin{proof}
Since \(Z_1(X,\Z) \subseteq A\), every point in \(Z_1(X,\R)\) is an affine combination of 
points \(i(\llbracket \alpha \rrbracket)\) where \(\llbracket \alpha \rrbracket\) is an atom, 
so an affine transformation of \(Z_1(X,\R)\) is uniquely determined by its action on such
points.  This proves the uniqueness of \(\rho\).  

For existence, take any \(g \in \Cov(X)\) and use Lemma \ref{covering_lemma_2} to write it as a pair \((f,\llbracket\beta\rrbracket)\) where \(f \in \Aut(X)\) and \(\llbracket \beta \rrbracket \maps x_0 \to f(x_0) \).   For any \(c \in Z_1(X,\R)\), define
\[     \rho(g) (c) = i(\llbracket\beta\rrbracket) + f_*(c) \]
where
\[     f_* \maps C_1(X,\R) \to C_1(X,\R)  \]
is the linear transformation with \(f_*(e) = f(e)\) for any edge \(e\) of \(X\).
Since \(f_*\) preserves the inner product on 1-chains and the subspace of 1-cycles
we have
\[    f_* \pi = \pi f_* \]
where \(\pi\) is the projection from 1-chains to 1-cycles.  Recall from Definition \ref{atom_definition} that \(i(\llbracket \alpha \rrbracket) = \pi(c_\alpha)\).  Thus 
we have
\[ \begin{array}{ccl}    
\rho(g) (i(\llbracket \alpha \rrbracket)) 
&=&  i(\llbracket\beta\rrbracket)  + f_* (i(\llbracket \alpha \rrbracket)) \\
&=& \pi(c_\beta) + f_*(\pi(c_\alpha)) \\
&=& \pi(c_\beta) + \pi(f_*(c_\alpha)) \\
&=& \pi(c_\beta + c_{f(\alpha)}) \\
&=& \pi(c_{\beta f(\alpha)})  \\
&=&  i(\llbracket \beta f(\alpha) \rrbracket) \\
&=&  i(g\llbracket \alpha \rrbracket)  
\end{array}
\]
where in the fourth step we used the fact that \(f_*(c_\alpha) = c_{f(\alpha)}\) for any path
\(\alpha\).   

To check that \(\rho\) is an action we need to show \(\rho(gh) = \rho(g)\rho(h)\) 
for any \(g,h \in \Cov(X)\).  This can be done by brute force, but it suffices to check 
it on points of the form \(i(\llbracket \alpha \rrbracket)\), since every point of 
\(Z_1(X,\R)\) is an affine combination of these.   For this, note that:
\[ \begin{array}{ccl}
 \rho(gh) (i(\llbracket \alpha \rrbracket)) &=& i( (gh) (\llbracket \alpha \rrbracket)) \\
&=&  i( g(h \llbracket \alpha \rrbracket)) \\
&=& \rho(g) (i (h \llbracket \alpha \rrbracket)) \\
&=& \rho(g)(\rho(h) (i(\llbracket \alpha \rrbracket))).
\end{array}
\]
Finally, is clear that \(\rho(g)\) is an affine isometry, since 
\[     \rho(g) (c) = i(\llbracket\beta\rrbracket) + f_*(c) \]
implies that \(\rho(g)\) is a linear isometry followed by a translation.
\end{proof}

We conclude by showing that \(\Cov(X)\) acts on the space of 1-cycles in a way
that preserves the topological crystal.  In general, any automorphism \(f\) of any graph functorially determines an automorphism of its geometric realization, which we call 
\(|f|\).   Thus, \(\Cov(X)\) acts as automorphisms of
\(|\overline{X}|\).   In Theorem \ref{embedding_theorem} we saw that
the map \(i \maps A \to Z_1(X,\R)\) extends uniquely to an embedding 
\( j \maps |\overline{X}| \to Z_1(X,\R) \) 
that is affine on bonds.   To show that  \(\Cov(X)\) preserves the topological
crystal, that is, the image of this embedding, we prove that \(j\) is equivariant:

\begin{thm}
\label{symmetry_theorem_2}
The embedding \(j \maps |\overline{X}| \to Z_1(X,\R)\) is equivariant with
respect to the action of \(\Cov(X)\), meaning that
\[            \rho(g) j(p)= j(|g|(p)) \]
for all \(g \in \Cov(X)\) and \(p \in |X|.\)  
\end{thm}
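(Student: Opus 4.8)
The plan is to reduce the statement to the equivariance of $i$ on atoms, which was established in Theorem \ref{symmetry_theorem_1}, by exploiting the fact that both $j$ and $\rho(g)\circ j\circ|g|^{-1}$ are maps $|\overline{X}|\to Z_1(X,\R)$ that are affine on bonds and agree on atoms. First I would recall the parametrization of a point $p\in|\overline{X}|$ by a triple $\llbracket\alpha\rrbracket\maps x_0\to x$, $e\maps x\to y$, $t\in[0,1]$, as in the proof of Theorem \ref{embedding_theorem}, so that $j(p)=\pi(c_p)$ with $c_p=c_\alpha+te$. Next I would compute how $|g|$ acts on such a triple: writing $g=(f,\llbracket\beta\rrbracket)$ via Lemma \ref{covering_lemma_2}, the automorphism $|g|$ of the geometric realization sends the bond determined by $(\llbracket\alpha\rrbracket,e)$ to the bond determined by $(\llbracket\beta f(\alpha)\rrbracket, f(e))$, carrying the parameter value $t$ to $t$ (the preferred parametrizations are compatible because $g$ is a map of graphs and $|g|$ is defined functorially). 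Hence $|g|(p)$ is the point given by the triple $\llbracket\beta f(\alpha)\rrbracket\maps x_0\to f(x)$, $f(e)\maps f(x)\to f(y)$, $t\in[0,1]$.

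From this description I would compute $j(|g|(p))=\pi(c_{|g|(p)})$ where $c_{|g|(p)}=c_{\beta f(\alpha)}+t\,f(e)=c_\beta+f_*(c_\alpha)+t\,f_*(e)=c_\beta+f_*(c_\alpha+te)=c_\beta+f_*(c_p)$, using $f_*(c_\alpha)=c_{f(\alpha)}$ and linearity of $f_*$. Applying $\pi$ and the intertwining relation $\pi f_*=f_*\pi$ from the proof of Theorem \ref{symmetry_theorem_1}, this gives
\[
j(|g|(p))=\pi(c_\beta)+f_*(\pi(c_p))=i(\llbracket\beta\rrbracket)+f_*(j(p))=\rho(g)(j(p)),
\]
the last equality being the definition of $\rho(g)$. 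This establishes the equb­variance identity pointwise for every $p$.

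Alternatively — and this is the cleaner packaging — I would observe that the composite $p\mapsto\rho(g)^{-1}j(|g|(p))$ is affine on bonds (since $|g|$ maps bonds to bonds respecting the parametrization up to an affine reparametrization, $j$ is affine on bonds, and $\rho(g)^{-1}$ is affine), and it agrees with $i$ on atoms precisely by Theorem \ref{symmetry_theorem_1} together with the fact that $|g|$ restricted to atoms is the map $g$ on $A$. By the uniqueness clause in Theorem \ref{embedding_theorem}, any map $|\overline{X}|\to Z_1(X,\R)$ that is affine on bonds and equals $i$ on atoms must be $j$; hence $\rho(g)^{-1}j(|g|(p))=j(p)$ for all $p$, which is the claim. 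I expect the only genuinely delicate point to be the bookkeeping that $|g|$ respects the preferred parametrization of bonds — that the parameter $t$ is sent to $t$ and not to $1-t$ — which comes down to checking that $g$, as defined in Lemma \ref{covering_lemma_2}, sends the oriented edge $(\llbracket\alpha\rrbracket,e)$ to the oriented edge $(\llbracket\beta f(\alpha)\rrbracket,f(e))$ with orientations matching, so that the induced map on geometric realizations is affine with positive slope on that interval; everything else is the same linear-algebra computation already carried out in Theorem \ref{symmetry_theorem_1}.
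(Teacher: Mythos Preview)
Your proposal is correct and follows essentially the same route as the paper: parametrize a point of \(|\overline{X}|\) by a triple \((\llbracket\alpha\rrbracket,e,t)\), determine the triple for \(|g|(p)\) via Lemma~\ref{covering_lemma_2}, and reduce to the linear algebra already done in Theorem~\ref{symmetry_theorem_1}. The only cosmetic difference is that the paper rewrites \(j(p)\) as the affine combination \((1-t)\,i(\llbracket\alpha\rrbracket)+t\,i(\llbracket\alpha e\rrbracket)\) and then applies the affinity of \(\rho(g)\) together with the equivariance of \(i\), whereas your first computation stays at the chain level using \(\pi f_*=f_*\pi\) and the explicit formula \(\rho(g)(c)=i(\llbracket\beta\rrbracket)+f_*(c)\); your alternative ``uniqueness'' packaging is a clean way to phrase the same idea.
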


\begin{proof}
As noted in the proof of Theorem \ref{embedding_theorem}, every point \(p \in |X|\)
corresponds to a triple 
\[ \llbracket \alpha \rrbracket \maps x_0 \to x, \quad e \maps x \to y, \quad 
t \in [0,1], \] 
and then we have
\[    j(p) = \pi(c_\alpha + te)  .\]
If we let \(\beta = \alpha e \) we thus have
\[   j(p) = (1-t) \pi(c_\alpha) + t \pi(c_\beta) \]
or in other words
\[   j(p) = (1-t) i(\llbracket \alpha \rrbracket) + t i(\llbracket \beta \rrbracket) .\]

By Lemma \ref{covering_lemma_2} any element \(g \in \Cov(X)\) corresponds to some 
pair \(f \in \Aut(X), \llbracket \beta \rrbracket \maps x_0 \to f(x_0)\).  The point \(|g|(p)\)
then corresponds to the triple 
\[ g(\llbracket \alpha \rrbracket) \maps x_0 \to f(x), \quad g(e) \maps f(x) \to f(y), \quad 
t \in [0,1]. \] 
Since \(\rho(g)\) is affine, it follows that
\[ \begin{array}{ccl}
\rho(g) j(p) &=& 
(1-t) \rho(g) ( i(\llbracket \alpha \rrbracket) ) \;+ \; t \rho(g)  i(\llbracket \beta \rrbracket)
\\  
&=& (1-t) i(g(\llbracket \alpha \rrbracket) ) \;+ \; t i(g(\llbracket \beta \rrbracket)) \\ 
&=&  j(|g|(p)).  \qquad\qquad\qquad \qquad\qquad\qquad \qedhere
\end{array} 
\]
\end{proof}

\section{The packing fraction}
\label{sec:packing}

In Section \ref{sec:atoms} we took a connected bridgeless graph \(X\) with a basepoint \(x_0\) and embedded its set of atoms, \(A\), into its space of 1-cycles, \(Z_1(X,\R)\).  Now we shall use this embedding to reinterpret \(A\) as a subset of \(Z_1(X,\R)\), as follows:
\[  A = \{ \pi(c_\alpha) :  \; \alpha \maps x_0 \to x \textrm{ for some } x \}   .\]
It is interesting to ask how densely packed these atoms are.  Computing the density of the corresponding sphere packing seems hard, but we can also ask what fraction of `potential locations for atoms' are actually filled by atoms.  To make sense of this, note that \(A\) 
is contained in the lattice \(L\) obtained by projecting the integral 1-chains down to the space of 1-cycles:
\[      L = \{   \pi(c) : \; c \in C_1(X,\Z)  \} \]
where \(\pi\) is the projection of the space of 1-chains onto the space of 1-cycles.
We can think of points in \(L\) as potential locations for atoms.  

We cannot define the fraction of potential atoms that are actual atoms to be 
\(|A|/|L|\), because both the numerator and denominator in this fraction are infinite.
To deal with this, note that 
\[    Z_1(X,\Z) \subseteq A \subseteq L  \]
and \(Z_1(X,\Z)\) acts as translations on both \(A\) and \(L\).  For \(L\) this is obvious, because \(Z_1(X,\Z)\) is a sublattice of \(L\).  For \(A\) it follows from
\[   Z_1(X,\Z) = \{ \pi(c_\gamma) : \;  \gamma \maps x_0 \to x_0 \}  \]
and the fact that composing paths has the effect of adding their 1-chains: we can 
compose any loop \(\gamma \maps x_0 \to x_0\) with any path 
\(\alpha \maps x_0 \to x\) and get an atom \(\pi(c_{\gamma \alpha}) = 
\pi(c_\gamma) + \pi(c_\alpha)\) which is the result of translating the atom 
\( \pi(c_\alpha) \) by \( \pi(c_\gamma) \in Z_1(X,\Z)\).

When \(X\) is a finite graph, \(A/Z_1(X,\Z)\) and \(L/Z_1(X,\Z)\) are finite abelian groups.
In this case we define the \define{packing fraction} to be the ratio 
\[  \frac{|A/Z_1(X,\Z)|}{|L/Z_1(X,\Z)|}.  \]
This is a regularized version of the meaningless ratio \(|A|/|L|\).

To compute the packing fraction, we need another description of the lattice
\(L\).   Given any lattice \(\Lambda\) in a finite-dimensional real inner product space \(V\), 
the \define{dual lattice} \(\Lambda^*\) is 
\[    \Lambda^* = \{ v \in V : \; \langle v, w \rangle \in \Z \textrm{ for all } w \in \Lambda \}.\]
We say \(\Lambda\) is \define{integral} if  \(\Lambda \subseteq \Lambda^*\), or in
other words, if the inner product of any two vectors in \(\Lambda\) is an integer. We say \(\Lambda\) is \define{self-dual} if \(\Lambda = \Lambda^*\).   

It is easy to see that \(C_1(X,\Z)\) is a self-dual lattice in \(C_1(X,\R)\), since edges
form an orthonormal basis.  It follows that \(Z_1(X,\Z)\) is an integral lattice in 
\(Z_1(X,\R)\), where the latter space inherits its inner product from \(C_1(X,\R)\).
However, \(Z_1(X,\Z)\) is not in general self-dual, and this gives another description
of the lattice \(L\):

\begin{lem} \label{dual_lemma}
If \(X\) is a finite graph then \(L = Z_1(X,\Z)^*\).
\end{lem}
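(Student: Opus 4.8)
The plan is to prove that \(L\) and \(Z_1(X,\Z)^*\) have the same dual lattice inside \(Z_1(X,\R)\), namely \(Z_1(X,\Z)\) itself, and then conclude by biduality. Throughout I would use two standard facts: the orthogonal projection \(\pi \maps C_1(X,\R) \to Z_1(X,\R)\) is self-adjoint and restricts to the identity on \(Z_1(X,\R)\); and, since the edges form an orthonormal basis, \(C_1(X,\Z)\) is self-dual, hence in particular integral, so \(\langle a,b\rangle \in \Z\) whenever \(a,b \in C_1(X,\Z)\). I will also use that \(Z_1(X,\Z) = C_1(X,\Z) \cap Z_1(X,\R)\) is a full-rank lattice in \(Z_1(X,\R)\).

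First I would check that \(L\) is genuinely a full-rank lattice in \(Z_1(X,\R)\), which is the only subtle point, since a finitely generated subgroup of a real vector space need not be discrete. On one side \(Z_1(X,\Z) \subseteq L\) because \(\pi\) fixes every integral cycle, so \(L\) spans \(Z_1(X,\R)\). On the other side, for \(c \in C_1(X,\Z)\) and \(z \in Z_1(X,\Z)\) we have \(\langle \pi(c), z \rangle = \langle c, \pi(z)\rangle = \langle c, z\rangle \in \Z\), using self-adjointness of \(\pi\), then \(\pi(z) = z\), then integrality of \(C_1(X,\Z)\); hence \(L \subseteq Z_1(X,\Z)^*\). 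Being sandwiched between the full-rank lattices \(Z_1(X,\Z)\) and \(Z_1(X,\Z)^*\) (and in particular discrete, as a subgroup of the discrete group \(Z_1(X,\Z)^*\)), the subgroup \(L\) is itself a full-rank lattice, so \((L^*)^* = L\).

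Next I would compute \(L^*\). By symmetry of the inner product, the computation above also shows \(Z_1(X,\Z) \subseteq L^*\). For the reverse inclusion, take \(v \in L^* \subseteq Z_1(X,\R)\); for each edge \(e\) the 1-chain \(\pi(e)\) lies in \(L\), so \(\langle v, \pi(e)\rangle \in \Z\), while \(\langle v, \pi(e)\rangle = \langle \pi(v), e\rangle = \langle v, e\rangle\) because \(v \in Z_1(X,\R)\). Hence every coordinate \(\langle v, e\rangle\) of \(v\) in the orthonormal basis of edges is an integer, so \(v \in C_1(X,\Z)\), and therefore \(v \in C_1(X,\Z) \cap Z_1(X,\R) = Z_1(X,\Z)\). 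This gives \(L^* = Z_1(X,\Z)\), and taking duals once more yields \(L = (L^*)^* = Z_1(X,\Z)^*\), as claimed. The main obstacle is really just the discreteness of \(L\); once that is in hand, the rest is a two-line manipulation of inner products exploiting that \(\pi\) is self-adjoint and that \(C_1(X,\Z)\) is self-dual.
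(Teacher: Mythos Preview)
Your argument is correct. The sandwich \(Z_1(X,\Z) \subseteq L \subseteq Z_1(X,\Z)^*\) settles discreteness, and the computation \(L^* = Z_1(X,\Z)\) via \(\langle v,\pi(e)\rangle = \langle v,e\rangle\) for \(v \in Z_1(X,\R)\) is clean; biduality then finishes the job.

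The paper, by contrast, gives no proof of its own here: it simply cites Lemma~1 of Bacher, de la Harpe and Nagnibeda. So your write-up is not merely a different route but a genuinely self-contained replacement for an external reference. What your approach buys is that the reader need not consult \cite{BHN} for this particular fact, and it makes transparent exactly which features of the setup are doing the work (self-adjointness of \(\pi\), self-duality of \(C_1(X,\Z)\), and \(Z_1(X,\Z) = C_1(X,\Z) \cap Z_1(X,\R)\)). The citation, on the other hand, ties the lemma into the broader lattice-theoretic framework of integral flows and cuts that the paper leans on later in Theorem~\ref{packing_fraction_theorem}, so there is some expository value in pointing the reader there as well.
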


\begin{proof}  This was shown by Bacher, de la Harpe and Nagnibeda \cite[Lemma 1]{BHN}.  
\end{proof}

Using this we can prove:

\begin{thm}
\label{packing_fraction_theorem}
If \(X\) is a finite graph without bridges, then its packing fraction is 
\[ \frac{|A/Z_1(X,\Z)|}{|L/Z_1(X,\Z)|} = \frac{|V|}{|T|}  \]
where \(V\) is the set of vertices of \(X\) and \(T\) is the set of spanning trees in \(X\).
In fact 
\[      |A/Z_1(X,\Z)| = |V|  \]
and 
\[      |L/Z_1(X,\Z)| = |T| .\]
\end{thm}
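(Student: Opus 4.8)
The plan is to prove the two displayed identities $|A/Z_1(X,\Z)| = |V|$ and $|L/Z_1(X,\Z)| = |T|$ separately; the packing fraction $|A/Z_1(X,\Z)| \big/ |L/Z_1(X,\Z)|$ is then just their ratio $|V|/|T|$. For the first identity I would exhibit a bijection between the set of $Z_1(X,\Z)$-orbits in $A$ and the vertex set of $X$. Send the atom $\pi(c_\alpha)$, with $\alpha \maps x_0 \to x$, to the vertex $x$. This is well-defined as a function on $A$ precisely because $X$ is bridgeless: by Theorem \ref{atom_thm} the map $i$ is one-to-one, so $\pi(c_\alpha) = \pi(c_\beta)$ forces $\llbracket \alpha \rrbracket = \llbracket \beta \rrbracket$, and homologous paths share the same endpoint (Section \ref{sec:maximal_abelian_covers}). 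This function is onto since $X$ is connected, and it is constant on each $Z_1(X,\Z)$-orbit: any element of $Z_1(X,\Z)$ has the form $c_\gamma$ for a loop $\gamma$ based at $x_0$, and then $\pi(c_\gamma) + \pi(c_\alpha) = \pi(c_{\gamma\alpha})$ is an atom with the same endpoint as $\alpha$. Conversely, if $\alpha \maps x_0 \to x$ and $\beta \maps x_0 \to x$ have the same endpoint, then $\alpha\beta^{-1}$ is a loop based at $x_0$, so $c_\alpha - c_\beta = c_{\alpha\beta^{-1}}$ is an integral 1-cycle and $\pi(c_\alpha)$, $\pi(c_\beta)$ lie in the same orbit. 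Hence the induced map $A/Z_1(X,\Z) \to V$ is a bijection, giving $|A/Z_1(X,\Z)| = |V|$.

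For the second identity, Lemma \ref{dual_lemma} gives $L = Z_1(X,\Z)^{*}$, so $|L/Z_1(X,\Z)|$ equals the index $[Z_1(X,\Z)^{*} : Z_1(X,\Z)]$, that is, the discriminant of the integral lattice $Z_1(X,\Z) \subset Z_1(X,\R)$. I would compute this with a fundamental-cycle basis: fix a spanning tree, let $z_1, \dots, z_g$ be the integral cycles associated to the edges outside it, and let $N$ be the matrix whose rows express the $z_i$ in the orthonormal edge basis of $C_1(X,\R)$. Then the index equals $\det(N N^{T})$, and Cauchy--Binet rewrites this as $\sum_S \det(N_S)^2$ summed over $g$-element sets $S$ of edges; a standard unimodularity argument shows $\det(N_S) = \pm 1$ exactly when the complementary set of edges is a spanning tree and $0$ otherwise, so $\det(N N^{T}) = |T|$. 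Equivalently, this is the Bacher--de la Harpe--Nagnibeda evaluation of $[Z_1(X,\Z)^{*} : Z_1(X,\Z)]$, which one may simply cite.

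The main obstacle is this last step: relating the lattice discriminant $[Z_1(X,\Z)^{*}:Z_1(X,\Z)]$ to the number of spanning trees. The rest is elementary bookkeeping with paths and cosets, but here one needs the matrix--tree theorem (via Cauchy--Binet) together with the facts that the fundamental cycles form an integral basis of $Z_1(X,\Z)$ and that the maximal minors of the cycle matrix are $0$ or $\pm 1$ — which is exactly where the work of Bacher, de la Harpe and Nagnibeda enters.
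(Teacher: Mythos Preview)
Your proof is correct. For the first identity you argue exactly as the paper does: the paper phrases the bijection $A/Z_1(X,\Z) \to V$ in the language of deck transformations of the maximal abelian cover, while you spell out the same map concretely in terms of endpoints of paths, but the content is identical.

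For the second identity your route differs from the paper's. The paper works through the cut lattice: it introduces the orthogonal decomposition $C_1(X,\R) = Z_1(X,\R) \oplus N_1(X,\R)$, cites Bacher--de~la~Harpe--Nagnibeda \cite{BHN} for $|N_1(X,\Z)^*/N_1(X,\Z)| = |T|$ (obtained there from Kirchhoff's theorem), and then invokes their general lattice isomorphism $Z_1(X,\Z)^*/Z_1(X,\Z) \cong N_1(X,\Z)^*/N_1(X,\Z)$ to transfer the count to the cycle side. Your argument stays entirely on the cycle side, computing the discriminant of $Z_1(X,\Z)$ directly from a fundamental-cycle basis via Cauchy--Binet and the unimodularity of the cycle matrix. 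Your approach is more elementary and self-contained (it never mentions cuts), while the paper's approach highlights the cycle--cut duality and connects to the structural results of \cite{BHN}. Both end by invoking Lemma~\ref{dual_lemma} to identify $L$ with $Z_1(X,\Z)^*$.
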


\begin{proof}
To show that \(|A/Z_1(X,\Z)| = |V|\), we use several facts.  First, \(A\) is the set of
vertices of the maximal abelian cover of \(X\).  Second, \(Z_1(X,\Z) \cong H_1(X,\Z)\).  
Third, via this isomorphism, the action of \(H_1(X,\Z)\) as deck transformations on
vertices of the maximal abelian cover is equivalent to the action of \(Z_1(X,\Z)\) by translations on \(A\).  Thus, the quotient \(A/Z_1(X,\Z)\) is isomorphic to the set of vertices of \(X\).

To show that \(|L/Z_1(X,\Z)| = |T|\) we use the work of Bacher, de la Harpe and Nagnibeda on integral cuts \cite{BHN}.    Besides the already mentioned inner product on 1-chains, there is an inner product on 0-chains for which the vertices of \(X\) form an orthonormal basis.  This lets us define the adjoint
\[            \partial^* \maps C_0(X,\R) \to C_1(X,\R) . \]
Concretely, for any vertex \(x\) we have
\[ \partial^* x = \sum_{e \maps y \to x} e \]
where we sum over all edges with target \(x\).  It is well known \cite{Biggs} that there is an orthogonal direct sum decomposition
\[            C_1(X,\R) = Z_1(X,\R) \oplus N_1(X,\R)  \]
where 
\[             N_1(X,\R) = \im \, \partial^* \]
is called the space of \define{cuts}.   Inside the space of cuts there is a lattice 
\[             N_1(X,\Z) = N_1(X,\R) \cap C_1(X,\Z)   \]
called the lattice of \define{integral cuts}.     However, the decomposition of 1-chains into 
1-cycles and cuts fails at the integral level: 
\[             C_1(X,\Z) \ne Z_1(X,\Z) \oplus N_1(X,\Z) .\]
This is the key to understanding the quotient \(L/Z_1(X,\Z)\).

Since \(C_1(X,\Z)\) is an integral lattice, so are \(N_1(X,\Z)\) and \(Z_1(X,\Z)\).  
If we had \( C_1(X,\Z) = Z_1(X,\Z) \oplus N_1(X,\Z) \) then \(N_1(X,\Z)\) and \(Z_1(X,\Z)\) would be self-dual, but in fact they are usually not.  With the help of Kirchhoff's spanning tree theorem, Bacher, de la Harpe and Nagnibeda \cite[Prop.\ 2]{BHN} prove that
\[    |{N_1(X,\Z)^*/N_1(X,\Z)}| = |T|  .\]
Then they use a very general fact about lattices \cite[Lemma 2]{BHN} to construct isomorphisms
\[       \frac{Z_1(X,\Z)^*}{Z_1(X,\Z)} \cong
         \frac{C_1(X,\Z)}{Z_1(X,\Z) \oplus N_1(X,\Z)} \cong 
          \frac{N_1(X,\Z)^*}{N_1(X,\Z)}.  \]
This implies
\[    |Z_1(X,\Z)^* / Z_1(X,\Z)| = |T| . \]
But \(Z_1(X,\Z)^* = L\) by Lemma \ref{dual_lemma}, so  
\[ |L/Z_1(X,\Z)| = |T|\] 
as desired, and thus 
\[   \frac{|A/Z_1(X,\Z)|}{|L/Z_1(X,\Z)|} = \frac{|V|}{|T|}  .   \qedhere \]
\end{proof}

The concept of `cut' has other applications to topological crystals.   In the proof above we saw that for any vertex \(x\),
\[ \partial^* x = \sum_{e \maps y \to x} e  \]
is a cut, so it projects to zero in \(Z_1(X,\R)\):
\[   \sum_{e \maps y \to x} \pi(e) = 0 .\]
Multiplying by \(-1\), we obtain 
\[    \sum_{e \maps x \to y} \pi(e) = 0 \]
where we sum over all edges with \(x\) as source.
For any atom \(\pi(c_\alpha)\) arising from a path \(\alpha \maps x_0 \to x\), the
bonds coming out of this atom correspond to edges \(e \maps x \to y\), and they
connect it to atoms \(\pi(c_{\alpha e}) = \pi(c_\alpha) + \pi(e)\).   The above
equation thus says that the bonds coming out of any atom give vectors in \(Z_1(X,\R)\)
summing to zero.  

This also follows from Kotani and Sunada's characterization of topological crystals in terms of energy minimization \cite{KotaniSunada, SunadaBook}.  If we think of the bonds as springs, all obeying Hooke's law with the same spring constant, the total force on each atom must vanish in equilibrium, so the bonds must give vectors that sum to zero.

\section{Examples}
\label{sec:examples}

If \(X\) is a graph such that \(\Aut(X)\) acts transitively on `arcs', meaning pairs consisting of a 
vertex and an edge incident to that vertex, then \(\Cov(X)\) acts transitively on the 
arcs of the corresponding topological crystal.  One example of this phenomena is the Laves 
graph, coming from the tetrahedron.   The symmetry group of the tetrahedron is the 
Coxeter group 
\[     \A_3 = \langle s_1, s_2, s_3 \;| \; (s_1s_2)^3 = (s_2s_3)^3 = s_1^2 = s_2^2 = s_3^2 = 1\rangle  . \]
Thus, by Lemma \ref{covering_lemma_1}, the group of covering symmetries of the Laves graph is an extension of \(\A_3\) by \(\Z^3\). 

More generally, the vertices and edges of any Platonic solid form a graph whose symmetry group acts transitively on arcs.  For example, the symmetry group of the cube and octahedron is the Coxeter group
\[     \B_3 = \langle s_1, s_2, s_3 \;| \; (s_1s_2)^3 = (s_2s_3)^4 = s_1^2 = s_2^2 = s_3^2 = 1\rangle  . \]
Since the cube has 6 faces, $H_1(X,\R)$ is 5-dimensional for the cube graph.   The corresponding topological crystal is thus 5-dimensional, and its group of covering symmetries, an extension of \(\B_3\) by \(\Z^5\), acts transitively on arcs.  Similarly, the octahedron gives a 7-dimensional topological crystal whose group of covering symmetries, an extension of \(B_3\) by \(\Z^7\), acts transitively on arcs.  The cuboctahedron can be seen either as a truncated cube or a truncated octahedron, so it too has \(\B_3\) as its symmetry group, and in fact this group acts transitively on arcs.   Since the cuboctahedron has 14 faces, we obtain a 13-dimensional crystal whose covering symmetries act transitively on arcs.

The symmetry group of the dodecahedron and icosahedron is
\[     \H_3 = \langle s_1, s_2, s_3 \;| \; (s_1s_2)^3 = (s_2s_3)^5= s_1^2 = s_2^2 = s_3^2 = 1\rangle  , \]
and these solids give crystals of dimensions 11 and 19.  The icosidodecahedron can be seen either as a truncated dodecahedron or a truncated icosahedron; it has \(\H_3\) as its symmetry group, and this group acts transitively on arcs.    It has 32 faces, so it gives a 31-dimensional crystal whose covering symmetries act transitively on arcs.

There is also an infinite family of degenerate Platonic solids called `hosohedra' with two vertices, \(n\) edges and \(n\) faces.   These faces cannot be made flat, since each face has just 2 edges, but that is not relevant to our construction: the vertices and edges still give a graph.  For example, when \(n = 6\), we have the `hexagonal hosohedron':

\begin{center} 
\raisebox{-0.0 em}{\includegraphics[scale = 0.5]{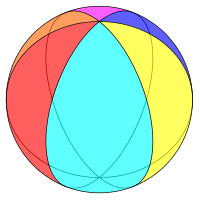}} 
\end{center}

\noindent
The symmetry group of the \(n\)-gonal hosohedron is \(\I_n \times \Z/2\), where the Coxeter group 
\[     \I_n = \langle s_1, s_2 \;| \; (s_1s_2)^n = s_1^2 = s_2^2 = 1\rangle   \]
is a dihedral group, and \(\Z/2\) acts to interchange the two vertices while fixing the
edges.   The automorphism group of its underlying graph is \(S_n \times \Z/2\).  The corresponding crystal has dimension \(n-1\), and its group of covering symmetries is an
extension of \(S_n \times \Z/2\) by \(\Z^{n-1}\).   The case \(n = 3\) gives the graphene crystal, while \(n = 4\) gives the diamond.

There are many other graphs whose symmetry group acts transitively on arcs.  One is the Petersen graph:

\begin{center} 
\raisebox{-0.0 em}{\includegraphics[scale = 0.2]{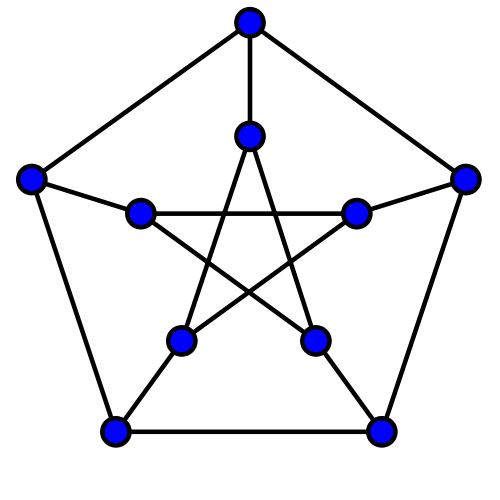}} 
\end{center}

\noindent
The symmetry group of the Petersen graph is \(\S_5\).  This graph has 10 vertices and 15 edges, so its Euler characteristic is \(-5\), which implies that its space of 1-cycles is 6-dimensional.  It thus gives a 6-dimensional crystal whose group of covering symmetries, an extension of \(\S_5\) by \(\Z^6\), acts transitively on arcs.  

Two more nice examples come from Klein's quartic curve, a Riemann surface of genus three on which the 336-element group \(\PGL(2,\mathbb{F}_7)\) acts as isometries.  These isometries preserve a tiling of Klein's quartic curve by 56 triangles, with 7 meeting at each vertex.   This picture is topologically correct, though not geometrically:

\begin{center} 
\raisebox{-0.0 em}{\includegraphics[scale = 0.35]{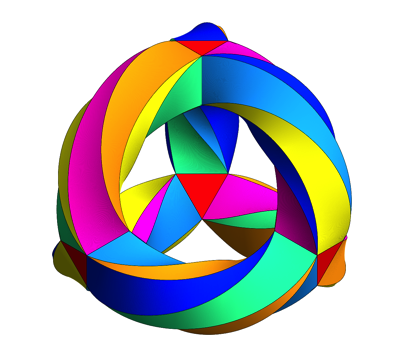}} 
\end{center}

\noindent
From this tiling we obtain a graph \(X\) embedded in Klein's quartic curve.  This graph has \(56 \times 3 / 2 = 84\) edges and \(56 \times 3 / 7 = 24\) vertices, so it has Euler characteristic \(-60\).   It thus gives a 61-dimensional topological crystal whose group of
covering symmetries, an extension of \(\PGL(2,\mathbb{F}_7)\) by \(\Z^{61}\), acts transitively on arcs.  There is also a dual tiling of Klein's curve by 24 heptagons, 3 meeting at each vertex.  This gives a graph with 84 edges and 56 vertices, hence Euler characteristic \(-28\).  From this we obtain a 29-dimensional topological crystal whose group of covering symmetries is an extension of \(\PGL(2,\mathbb{F}_7)\) by \(\Z^{29}\).  Again, this group acts transitively on arcs.

Here is some data about the examples discussed above.  Greg Egan used Mathematica to count the spanning trees.  In this table, \(\Z^k \, . \, G\)  denotes an extension of the group \(G\) by \(\Z^k\).

\vskip 2em
\begin{center}
\scalebox{0.9}{
\renewcommand{\arraystretch}{1.3}
\begin{tabular}[h]{|c|c|c|c|} \hline
 \bf Polyhedron &   \bf  Covering Symmetries & \bf Spanning Trees &  \bf Packing Fraction  \\ \hline
 tetrahedron & \(\Z^3 \, . \, \A_3\) & \(16\) & \(1/4\)   \\
\hline
cube  & \(\Z^5 \, . \, \B_3\) & 384 &  1/48 \\ 
\hline
octahedron & \(\Z^7 \, . \, \B_3\) & 384 & 1/64   \\
\hline
cuboctahedron & \(\Z^{13} \, . \, \B_3\)  & 331,776 &  1/27,648  \\
\hline
dodecahedron  & \(\Z^{11} \, . \, \H_3\)  &  5,184,000 & 1/259,200   \\
\hline
icosahedron  & \(\Z^{19} \, . \, \H_3\)  &  5,184,000 &   1/432,000 \\
\hline
icosidodecahedron  & \(\Z^{31} \, . \, \H_3\)  &   208,971,104,256,000  &  1/6,965,703,475,200 \\
\hline
\(n\)-gonal hosohedron  & \(\Z^{n-1} \, . \, (\S_n \times \Z/2) \) & \(n\) &  \(2/n\)  \\  
\hline
Petersen graph & \(\Z^6 \, . \, \S_5\) & 2,000 & 1/200   \\
\hline
Klein quartic  & \(\Z^{29} \, . \, \PGL(2,\F_7)\)  & 38,542,412,611,584,000,000 & 1/688,257,368,064,000,000   \\
heptagonal tiling & & & \\
\hline
Klein quartic  & \(\Z^{61} \, . \, \PGL(2,\F_7)\)  &    846,083,041,649,491,968
  & 1/35,253,460,068,728,832   \\
 triangular tiling & & & \\
 \hline 
\end{tabular}
}
\end{center}

\section{Conclusions}

We leave the reader with two questions.  First, when are all symmetries of the maximal abelian cover of \(X\) covering symmetries?  That is, when is \(\Cov(X) = \Aut(\overline{X})\)?  

Second, when is \(\Cov(X)\) actually a semidirect product of \(\Aut(X)\) and \(H_1(X,\Z)\)?
The short exact sequence  
\[       1 \longrightarrow H_1(X,\Z) \longrightarrow \Cov(X) \stackrel{\psi}{\longrightarrow} \Aut(X) \longrightarrow 1 \]
splits, making \(\Cov(X)\) into a semidirect product, whenever every automorphism of \(X\) fixes the basepoint \(x_0\).  It does not split when \(X\) is this graph:

\begin{center} 
\raisebox{-0.0 em}{\includegraphics[scale = 0.15]{grapheneGraph.png}} 
\end{center}

\noindent So, the short exact sequence does not describe \(\Cov(X)\) as a semidirect product in this case.  The reason is that while \(\Aut(X)\) is the symmetry group of a regular hexagon, this group does not act as symmetries fixing some atom in the corresponding crystal:

\vskip 2em
\begin{center} 
\raisebox{-0.0 em}{\includegraphics[scale = 0.2]{graphene2Ddark_small.png}} 
\end{center}

\noindent
Nonetheless, \(\Cov(X)\) can still be seen as a semidirect product of \(\Aut(X)\) and \(H_1(X,\Z)\) in this case, because this crystal has hexagonal symmetries about a point 
that is not an atom.  

On the other hand, suppose \(X\) is the graph giving the diamond crystal:

\begin{center} 
\raisebox{-0.0 em}{\includegraphics[scale = 0.15]{diamondGraph.png}} 
\end{center}

\noindent
In this case \(\Aut(X)\) is the symmetry group of a cube, and there is no way to get this group to act as symmetries of the diamond, so there is no way to express \(\Cov(X)\) as a semidirect product of \(\Aut(X)\) and \(H_1(X,\Z)\).     What general phenomenon is at work here?

\end{document}